\renewenvironment{proof}[1][\proofname] {\par\pushQED{\qed}\normalfont\topsep6\p@\@plus6\p@\relax\trivlist\item[\hskip\labelsep\bfseries#1\@addpunct{.}]\ignorespaces}{\popQED\endtrivlist\@endpefalse}
\newtheorem{theorem}{\bf Theorem}[section]
\newtheorem{lemma}[theorem]{\bf Lemma}
\theoremstyle{definition}
\newtheorem{definition}[theorem]{\bf Definition}
\def\eps{\varepsilon}
\title{Resolution of the Erd\H os--Sauer problem on regular subgraphs}
\author{Oliver Janzer\thanks{Department of Mathematics, ETH Z\"urich, Switzerland. Email: \href{mailto:oliver.janzer@math.ethz.ch} {\nolinkurl{oliver.janzer@math.ethz.ch}}.
	Research supported by an ETH Z\"urich Postdoctoral Fellowship 20-1 FEL-35.}
	\and Benny Sudakov\thanks{Department of Mathematics, ETH Z\"urich, Switzerland. Email:
	\href{mailto:benny.sudakov@gmail.com} {\nolinkurl{benny.sudakov@gmail.com}}.
	Research supported in part by SNSF grant 200021\_196965.}
}
\date{}
\begin{document}

\maketitle

\begin{abstract}
In this paper we completely resolve the well-known problem of Erd\H os and Sauer from 1975 which asks for the maximum number of edges an $n$-vertex graph can have without containing a $k$-regular subgraph, for some fixed integer $k\geq 3$. We prove that any $n$-vertex graph with average degree at least $C_k\log \log n$ contains a $k$-regular subgraph. This matches the lower bound of Pyber, R\"odl and Szemer\'edi and substantially improves an old result of Pyber, who showed that average degree at least $C_k\log n$ is enough.

Our method can also be used to settle asymptotically a problem raised by Erd\H os and Simonovits in 1970 on almost regular subgraphs of sparse graphs and to make progress on the well-known question of Thomassen from 1983 on finding subgraphs with large girth and large average degree.
\end{abstract}

\section{Introduction}

The problem of finding regular subgraphs in graphs has a very long history. Note that finding a $1$-regular subgraph is the same as finding a matching. One of the oldest results of graph theory is Petersen's theorem from 1891 \cite{Pet1891}, which states that every cubic, bridgeless graph contains a perfect matching. The celebrated Hall's theorem \cite{Hall35} gives a necessary and sufficient condition for a bipartite graph to have a perfect matching, while Tutte's theorem \cite{Tut47} gives such a condition for an arbitrary graph. Later, Tutte \cite{Tut52} found a necessary and sufficient condition for a graph to contain a $k$-regular spanning subgraph, and more generally, for it to contain an $f$-factor -- that is, a spanning subgraph in which each vertex $v$ has degree $f(v)$.

The problem of finding general, not necessarily spanning, regular subgraphs was also extensively studied. In 1975, Erd\H os and Sauer \cite{Erd75} asked the following natural extremal question. Given a positive integer $k$, what is the maximum number of edges that an $n$-vertex graph can have if it does not contain a $k$-regular subgraph?  The problem also appeared in Bollob\'as's book on extremal graph theory \cite{Bol78} and in the book of Bondy and Murty \cite{BM76}. Later, Erd\H os \cite{Erd81} mentioned this as one of his favourite unsolved problems. Let us write $f_k(n)$ for the smallest number of edges which guarantees a $k$-regular subgraph. Trivially $f_2(n)=n$, but already for $k=3$, the answer is unclear. Erd\H os and Sauer observed that $f_3(n)=O(n^{8/5})$ follows from the known upper bound for the Tur\'an number of the cube \cite{ES70} and suggested that $f_k(n)\leq n^{1+\eps}$ for any fixed $\eps>0$ and sufficiently large $n$.

A very influential algebraic technique to find regular subgraphs was developed 
in the early 80's by Alon, Friedland and Kalai. In \cite{AFK84short},
motivated by a conjecture of Berge and  Sauer, they showed that any $4$-regular multigraph plus an edge contains a $3$-regular subgraph. Alon, Friedland and Kalai \cite{AFK84} also proved 
several results which state, roughly speaking, that nearly regular graphs have regular subgraphs with not too small degree.
One of these results, together with an ingenious argument, was used by Pyber \cite{Pyb85} in 1985 to show that indeed $f_k(n)\leq n^{1+\eps}$. More precisely, Pyber proved that any $n$-vertex graph with average degree at least $C_k\log n$ contains a $k$-regular subgraph (here and in the rest of the paper logarithms are to the base two).

At this point, the best lower bound was due to Chv\'atal \cite{Erd75}, who had shown that $f_3(2n+3)> 6n$. This was greatly improved by Pyber, R\"odl and Szemer\'edi \cite{PRSz95}, who found a remarkable construction of  graphs with a superlinear number of edges that do not contain $k$-regular subgraphs.

\begin{theorem}[Pyber--R\"odl--Szemer\'edi \cite{PRSz95}] \label{thm:PRSzlower}
    There is some $c>0$ such that for every $n$ there exists an $n$-vertex graph with at least $cn\log \log n$ edges which does not contain a $k$-regular subgraph for any $k\geq 3$.
\end{theorem}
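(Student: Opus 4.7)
The plan is to exhibit, for every $n$, an $n$-vertex graph with $\Omega(n\log\log n)$ edges and no $k$-regular subgraph for any $k\ge 3$, via a random layered construction combined with a union-bound analysis.

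\textbf{Construction.} Let $L=\lfloor \log\log n\rfloor$ and partition $[n]$ into classes $V_0,V_1,\ldots,V_L$ with geometrically decreasing sizes, say $|V_i|\asymp n/2^i$. Between $V_{i-1}$ and $V_i$, I place an independent random bipartite graph in which each vertex of $V_i$ has a uniformly chosen set of $d_i\asymp 2^{i-1}$ neighbors in $V_{i-1}$. With these parameters, the number of edges between $V_{i-1}$ and $V_i$ is $\Theta(n)$ for every $i$, yielding a total of $\Theta(nL)=\Theta(n\log\log n)$ edges, while the degree of a typical vertex in $V_i$ is $\Theta(2^i)$, growing geometrically with the level.

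\textbf{Ruling out regular subgraphs.} For each $k\ge 3$ and each vertex subset $S\subseteq V$, I would bound the probability that $G[S]$ admits a $k$-regular subgraph $H$. A key observation is that since degrees vary dramatically across levels, $H$ must ``cut'' a controlled number of edges at each vertex of $S$ to reach exactly $k$. I would parametrize candidate $H$ by its profile $(|S\cap V_i|)_{i=0}^{L}$ and by how each vertex distributes its $k$ edges between the adjacent levels; for each fixed profile, the randomness of the bipartite graphs should make the existence of such an $H$ exponentially unlikely in the total number of edges it uses. Summing over profiles, $k$, and $S$ via a careful union bound would then show that with positive probability no such $H$ exists.

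\textbf{Main obstacle.} The chief difficulty is handling small subsets $S$, where the probability of a $k$-regular subgraph inside $G[S]$ is only polynomially small and the crude $2^{|V|}$ union bound fails. To cope with this, I would ensure that each bipartite layer is locally tree-like with high probability (so that small regular substructures within a single layer cannot exist), and use an expansion-type argument to rule out small cross-level $k$-regular subgraphs. Combined with standard Chernoff-type concentration for the edge count, this should give the theorem. Tuning the parameters $|V_i|$ and $d_i$ so that all three requirements --- enough edges, locally tree-like structure at each level, and sufficient expansion across levels --- hold simultaneously is the delicate part of the construction.
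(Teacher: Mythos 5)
The paper does not prove Theorem~\ref{thm:PRSzlower}; it is a black-box citation of Pyber, R\"odl and Szemer\'edi, whose construction is only sketched in Section~2. That construction is a bipartite graph: one part $B$ of size $n$, the other part the disjoint union of sets $A(j)$ for $\frac14\log\log n\le j\le\frac12\log\log n$ with $|A(j)|=n/2^{2^j}$, and every $v\in B$ has exactly one uniformly random neighbour in each $A(j)$. Thus degrees in $B$ are about $\log\log n$, while degrees in $A(j)$ are about $2^{2^j}$, growing \emph{doubly} exponentially with $j$.

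Your proposal is a genuinely different construction---a chain $V_0,\dots,V_L$ of $L\asymp\log\log n$ levels with $|V_i|\asymp n/2^i$ and out-degree $d_i\asymp 2^{i-1}$ from $V_i$ to $V_{i-1}$---and, as written, it cannot work. The degree scales grow only geometrically, so near the top they are of order $\log n$, which is far too little separation. Concretely, consider the induced bipartite graph $G[V_{L-1}\cup V_L]$. Every vertex of $V_L$ has degree exactly $d_L\asymp(\log n)/2$ there, and a Chernoff bound together with a union bound shows that with high probability the maximum degree on the $V_{L-1}$ side is $O(\log n)$. So $G[V_{L-1}\cup V_L]$ has maximum degree $O(\log n)$ and average degree $\Theta(\log n)$; since $\log n\gg C\log\log n$, Theorem~\ref{thm:PRSz} (the Pyber--R\"odl--Szemer\'edi upper bound, quoted in this very paper) already forces a $k$-regular subgraph for every fixed $k\ge 3$. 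The same reasoning applies to any pair of consecutive high levels, so with high probability your graph is full of $3$-regular subgraphs. No ``locally tree-like'' or expansion argument can rescue this: the obstruction is not a weak union bound over small sets $S$, but the fact that at these degree scales near-biregular random bipartite graphs genuinely do contain regular subgraphs. The doubly exponential sizes $|A(j)|=n/2^{2^j}$ in the actual construction are precisely what allows each $v\in B$ to have a \emph{single} neighbour per layer (so no single layer can host even a $2$-regular subgraph) while still accumulating $\Theta(\log\log n)$ layers; a chain with single-exponential spacing cannot replicate this.
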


\noindent
Furthermore, Pyber, R\"odl and Szemer\'edi generalized Pyber's result to show that for any positive integer $k$, there is a constant $C=C(k)$ such that any graph with maximum degree~$\Delta$ and average degree at least $C\log \Delta$ contains a $k$-regular subgraph.

Despite substantial interest from many researchers, the above bounds have not been improved in the last 30 years. At the same time, several variants of the original problem have been considered. Bollob\'as, Kim and Verstra\"ete \cite{BKV06} studied the threshold for a random graph to have a $k$-regular subgraph. R\"odl and Wysocka \cite{RW97} investigated the largest $r$ for which every $n$-vertex graph with at least $\gamma n^2$ edges has an $r$-regular subgraph. Many papers have been written on the existence of regular subgraphs in hypergraphs; see, e.g., \cite{MV09,DHL+,KK14,Kim16,HK18}.

In this paper we prove the following result, which completely resolves the problem of Erd\H os and Sauer.

\begin{theorem} \label{thm:logloglog error term}
    For any positive integer $k$, there is a constant $C=C(k)$ such that any graph with maximum degree $\Delta$ and average degree at least $C\log \log \Delta$ contains a $k$-regular subgraph. In particular, any $n$-vertex graph with average degree at least $C \log \log n$ contains such a subgraph.
\end{theorem}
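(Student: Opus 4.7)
My plan is to prove the theorem via an iterative \emph{polynomial} max-degree reduction that costs only $O_k(1)$ in minimum degree per step. Starting from a graph $G$ with maximum degree $\Delta$ and average degree $d \geq C_k\log\log\Delta$, I would run $t = O(\log\log\Delta)$ reduction rounds; each round should roughly square-root the maximum degree while decreasing the minimum degree by only a $k$-dependent constant. Because the total additive loss is $O_k(\log\log\Delta)$ and matches the hypothesis on $d$, the endpoint of the iteration would be a nearly-regular bipartite graph on which a classical Alon--Friedland--Kalai-style argument finds a $k$-regular subgraph.

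As preprocessing, I first pass to a bipartite subgraph with average degree at least $d/2$ (a random two-colouring of the vertices keeps half the edges in expectation) and then iteratively delete vertices of low degree to obtain a bipartite subgraph $H_0$ with minimum degree $\Omega(d)$ and maximum degree at most $\Delta$. The core ingredient would be a \emph{Reduction Lemma}: if $H$ is a bipartite graph with minimum degree $\delta \geq C_0(k)$ and maximum degree $D$ (with $D$ large in terms of $k$), then $H$ contains a subgraph with minimum degree at least $\delta - c_k$ and maximum degree at most $D^{1/2}$. Iterating this lemma starting from $H_0$ and setting $D_{i+1} = D_i^{1/2}$ yields $D_t \leq M(k)$ after $t = \lceil \log_2\log_2 \Delta \rceil$ steps, and the minimum degree degrades by at most $c_k\cdot t = O_k(\log\log\Delta)$. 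Choosing $C_k$ large enough, the final subgraph $H^\ast$ is bipartite with maximum degree at most $M(k)$ and minimum degree at least $2p$ for a chosen prime $p \geq k$.

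For the extraction step I would invoke Alon--Friedland--Kalai: $H^\ast$ is almost regular, with max-to-min degree ratio bounded by a constant depending only on $k$, and so contains a $p$-regular subgraph. Since $H^\ast$ is bipartite, this $p$-regular subgraph decomposes into $p$ perfect matchings by König's edge-colouring theorem, and the union of any $k$ of them is a $k$-regular subgraph of $G$. The max-degree version of the theorem immediately yields the $n$-vertex version via the trivial bound $\Delta \leq n-1$.

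The hardest part will be the Reduction Lemma itself. A naive deletion of all vertices of degree above $D^{1/2}$ can annihilate essentially all edges: for instance, if the heavy side of a bipartite graph consists entirely of high-degree vertices, nothing survives. I would expect the proof to combine an Alon--Friedland--Kalai-type structural dichotomy (either enough local regularity to already produce a $k$-regular subgraph, or a severely constrained degree distribution) with a carefully designed probabilistic sparsification, in which each high-degree vertex is replaced by several random ``copies'' of bounded degree chosen so that each retained low-degree vertex loses only $O_k(1)$ of its neighbours in expectation and with high probability. The crucial improvement over Pyber's $\log\Delta$ bound is that the per-step cost here is additive and independent of $D$, whereas dyadic bucketing only shrinks $D$ by a constant factor per round and therefore pays $\log\Delta$ in total.
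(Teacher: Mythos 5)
Your proposal rests entirely on a \emph{Reduction Lemma} that you do not prove: every bipartite graph with minimum degree $\delta\geq C_0(k)$ and maximum degree $D$ contains a subgraph with minimum degree $\geq\delta-c_k$ and maximum degree $\leq D^{1/2}$. You correctly flag this as the hardest step, but the sketch you offer for it does not amount to a proof, and the obstacles are serious. A simple edge-count shows the reduction cannot be achieved by deleting edges alone: in a $(D,\delta)$-biregular bipartite graph with $|Y|D=|X|\delta$, cutting every $y\in Y$ down to degree $D^{1/2}$ requires deleting $\approx|Y|D=|X|\delta$ edges, which is incompatible with every $x\in X$ losing only $c_k$ edges once $\delta>c_k$. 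So vertices must be deleted, and deciding \emph{which} vertices to delete so that the surviving degrees on \emph{both} sides stay in a tight band is exactly the crux. Your proposed mechanism — replacing a high-degree vertex by several random bounded-degree ``copies'' — produces a graph that is not a subgraph of $G$, so a regular subgraph of the modified graph gives no regular subgraph of $G$; and the appeal to an unspecified ``Alon--Friedland--Kalai-type structural dichotomy'' leaves the essential idea missing.

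The paper's actual route is quite different in structure and in mechanism, and it is worth contrasting. The $\log\log\Delta$ in the hypothesis is \emph{not} consumed by $\log\log\Delta$ iterations of a maximum-degree halving step. Instead, it is paid once, up front, in a bucketing step (Theorem~\ref{thm:general main result}): after making one side of a bipartite subgraph $H$ exactly $20r^2\log\log\Delta$-regular, the other side is partitioned into $\ell=O(r\log\log\Delta)$ degree classes $A_i$ whose multiplicative width is $2^{t_i/(10r)}$; pigeonholing produces a class $A_i$ in which each $v\in B$ retains $\geq r$ neighbours. This already ensures the log-scale spread $t-s$ of $A$-degrees is $\leq t/(6r)$. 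The iterated key lemma (Lemma~\ref{lem:iterated key lemma}) is then applied only a tower-log number of times, each application collapsing $t-s$ to roughly $\log\bigl(40(t-s)r^2\bigr)$. The engine inside the key lemma is the part your sketch misses: each $u\in A$ is retained independently with probability $2^{\alpha(u)-t}$ proportional to its (dyadic) degree, and a vertex $v\in B$ survives only if \emph{all} $r$ of its neighbours survive; after pigeonholing on the weighted degree sum $\beta(v)=\sum_{u\in N(v)}\alpha(u)$, the retention probability of $v$ conditioned on $u\in A'$ is $2^{\gamma-\alpha(u)-(r-1)t}$, whose $\alpha(u)$-dependence exactly cancels the size $|N_G(u)|\leq 2^{\alpha(u)}$, so every surviving $u$ ends up with expected degree $\leq 2^{\gamma-(r-1)t}$ regardless of its original degree. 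Making this work also needs a codegree bound, obtained from $K_{k,k}$-freeness via Lemmas~\ref{lem:large degrees force K_kk} and~\ref{lem:clean large codegrees}; your outline does not touch codegrees at all, yet they are what make the survival events approximately independent. In short: your high-level plan (pass to bipartite, iterate a degree-regularising step, finish with an AFK/Pyber-type extraction) is sound, but the specific Reduction Lemma you posit is unjustified, your proof sketch for it would not yield a subgraph, and the actual mechanism needed — degree-weighted retention probabilities plus pigeonholing on $\beta$, with a codegree hypothesis to enforce independence — is absent from the proposal.
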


Our results also make progress on two other old and well-known problems. The first one, due to Erd\H os and Simonovits from 1970 \cite{ES70}, concerns the question of how dense an almost-regular subgraph there must exist in a graph with $n$ vertices and $n\log n$ edges. An almost-regular graph here is one in which the maximum degree and the minimum degree differ by at most a constant factor. Our results resolve this question asymptotically, showing that one can find an almost-regular subgraph with $m=\omega(1)$ vertices and at least $m(\log m)^{1/2-o(1)}$ edges, which is tight by a result of Alon \cite{Alon08}. Since the full discussion of this topic takes considerable space, we postpone it to the concluding remarks (Section \ref{sec:concluding remarks}).

Finally, let us discuss the other application of our main result.
In 1983, Thomassen \cite{Tho83} conjectured that for every $t,g\in \mathbb{N}$, there exists some $d$ such that any graph with average degree at least $d$ contains a subgraph with average degree at least $t$ and girth at least $g$. K\"uhn and Osthus \cite{KO04} proved this for $g\leq 6$ (see also \cite{MPS21} for an improved bound on $d$), but the general case is wide open. It is easy to see that for every $t,g\in \mathbb{N}$, if $d$ is sufficiently large, then any $d$-regular graph has a subgraph with average degree at least $t$ and girth at least $g$. Hence, our Theorem \ref{thm:logloglog error term} implies that any graph $G$ with average degree at least $C(t,g)\log \log \Delta(G)$ contains a subgraph with average degree at least $t$ and girth at least $g$. This improves the best bound towards Thomassen's conjecture, due to Dellamonica and R\"odl \cite{DR11}, which states that any graph $G$ with average degree at least $\alpha(t,g) (\log \log \Delta(G))^{\beta(t,g)}$ contains a subgraph with average degree at least $t$ and girth at least $g$.

\medskip

The rest of the paper is organized as follows. In the next section, we give a sketch of the proof of Theorem \ref{thm:logloglog error term}. In Section \ref{sec:preliminaries}, we prove some simple preliminary lemmas. The key lemma is proved in Section \ref{sec:key lemma}. The proof of Theorem \ref{thm:logloglog error term} is then completed in Section \ref{sec:complete proof}. We give some concluding remarks about large almost-regular subgraphs in Section \ref{sec:concluding remarks}.

\textbf{Notation.} For a graph $G$ and a vertex $u\in V(G)$, $N_G(u)$ denotes the neighbourhood of $u$ in $G$ and we write $d_G(u)$ for the degree of $u$. Given vertices $u$ and $u'$, $d_G(u,u')$ stands for the number of common neighbours of $u$ and $u'$ in $G$. We write $\Delta(G)$ for the maximum degree of $G$.

\section{An overview of the proof}
Although our proof is short, we think that it may be useful to give a sketch of the main ideas. We start by briefly discussing the lower bound construction of Pyber, R\"odl and Szemer\'edi since it partly motivates our argument. Their (random) graph can be described roughly as follows. The vertex set is $A\cup B$, where $|B|=n$, $A$ is the disjoint union of sets $A(j)$ for $\frac{1}{4}\log \log n\leq j\leq \frac{1}{2}\log \log n$ with $|A(j)|=n/2^{2^j}$, and for each $j$ and $v\in B$, $v$ has a unique neighbour in $A(j)$, chosen uniformly at random. Note that a typical vertex in $A(j)$ has degree about $2^{2^j}$.

In a general bipartite graph, we obtain a similar partitioning of part $A$ according to the degrees and show, very roughly speaking, that if there is some $j$ such that each $v\in B$ has at least a large constant number of neighbours in $A(j)$ (unlike in the above construction where each $v\in B$ has only one neighbour there), then $G$ already has a $k$-regular subgraph.

More precisely, let $r$ be a large constant (that can depend on $k$) and let $G$ be a bipartite graph with parts $A$ and $B$ such that for some positive integers $s,t\geq r$, every $v\in B$ has degree $r$, the average degree of a vertex in $A$ is at least $2^s$, the maximum degree of a vertex in $A$ is at most $2^t$ and $t\leq (1+\frac{1}{r-1})s$.
We remark that it is possible (and not too hard) to find such a subgraph in any graph with maximum degree $\Delta$ and average degree at least $100r^2\log \log \Delta$.
For the sake of simplicity, let us also assume that $G$ is $C_4$-free, although this assumption can be significantly relaxed -- we will briefly discuss this later. We shall now argue that $G$ has a $k$-regular subgraph.

Our key lemma (stated and proved in Section \ref{sec:key lemma}) provides a subgraph which, although is not necessarily regular, has much better regularity properties than $G$. More precisely, it asserts the existence of subsets $A''\subset A$, $B''\subset B$ and positive integers $s',t'\geq r$ such that in the graph $G[A''\cup B'']$ all the previous conditions are satisfied (with $A'',B'',s',t'$ in place of $A,B,s,t$), and, additionally, $t'-s'\leq \log(40(t-s)r^2)$. Hence, as long as $t-s> 3\log r$, $t'-s'$ is smaller than $t-s$.
Iteratively applying this lemma, eventually we obtain subsets $A^*\subset A$, $B^*\subset B$ and positive integers $s^*,t^*\geq r$ such that in the graph $G[A^*\cup B^*]$, every $v\in B^*$ has degree $r$, the average degree of a vertex in $A^*$ is at least $2^{s^*}$, the maximum degree of a vertex in $A^*$ is at most $2^{t^*}$ and $t^*-s^*\leq 3\log r$.
The last property means that the maximum degree is at most $2^{3\log r}$ times the average degree in $A^*$. Taking a random subset $\hat{B}\subset B^*$ of size $|A^*|$ and deleting the vertices in $G[A^*\cup \hat{B}]$ whose degree is much larger than expected, we obtain a subgraph with average degree about $r$ and maximum degree at most about $r2^{3\log r}=r^4$. We can then use the result of Pyber, R\"odl and Szemer\'edi (see Theorem~\ref{thm:PRSz}) to find a $k$-regular subgraph in this graph.

We shall now sketch the proof of our key lemma. Let $A_{s+1}=\{u\in A:d_G(u)\leq 2^{s+1}\}$ and for each $s+2\leq i\leq t$, let $A_i=\{u\in A: 2^{i-1}<d_G(u)\leq 2^{i}\}$. Observe that the sets $A_{s+1},\dots,A_t$ partition $A$. For each $u\in A$, let $\alpha(u)$ be the unique $i$ with $u\in A_i$. For $v\in B$, let
$\beta(v)=\sum_{u\in N_G(v)} \alpha(u)$. Note that for every $v\in B$, we have $(s+1)r\leq \beta(v)\leq tr$, so there are at most $(t-s)r$ possible values for $\beta(v)$. Hence, by the pigeon hole principle, there are some $\gamma\geq (s+1)r$ and a subset $\tilde{B}\subset B$ of size at least $\frac{|B|}{(t-s)r}$ such that $\beta(v)=\gamma$ for all $v\in \tilde{B}$.

Let $A'$ be a random subset of $A$ where each $u\in A$ is kept independently with probability $2^{\alpha(u)-t}$. Let $B'=\{v\in \tilde{B}: N_G(v)\subset A'\}$. Let us see why we expect $G[A'\cup B']$ to be nearly biregular. Firstly, every $v\in B'$ has degree precisely $r$ in $G[A'\cup B']$.
Now let $u\in A$. We claim that, conditional on $u\in A'$, $|N_G(u)\cap B'|$ is distributed as a binomial random variable $\textrm{Bin}\big(|N_G(u)\cap \tilde{B}|, 2^{\gamma-\alpha(u)-(r-1)t}\big)$. Indeed, conditional on $u\in A'$, each $v\in N_G(u)\cap \tilde{B}$ belongs to $B'$ with probability $\prod_{w\in N_G(v)\setminus \{u\}} 2^{\alpha(w)-t}=2^{\beta(v)-\alpha(u)-(r-1)t}=2^{\gamma-\alpha(u)-(r-1)t}$ and these events are independent for all $v$ since $N_G(v)\setminus \{u\}$ are pairwise disjoint as $G$ is $C_4$-free. Thus, if $u\in A'$, then the degree of $u$ in $G[A'\cup B']$ is concentrated around $|N_G(u)\cap \tilde{B}|2^{\gamma-\alpha(u)-(r-1)t}$. Since $|N_G(u)\cap \tilde{B}|\leq d_G(u)\leq 2^{\alpha(u)}$, we see that it is very unlikely that the degree of $u$ in $G[A'\cup B']$ is much larger than $2^{\gamma-(r-1)t}$. On the other hand, $\tilde{B}$ has size similar to $B$, so on average $|N_G(u)\cap \tilde{B}|$ is not much smaller than $d_G(u)$, which is in turn at least $2^{\alpha(u)-1}$ (unless $\alpha(u)=s+1$). Hence, it is not hard to see that the average degree in $G[A'\cup B']$ of a vertex in $A'$ is expected to be not much smaller than $2^{\gamma-(r-1)t}$. A small loss arises from the fact that $\tilde{B}$ is potentially smaller than $B$ by a factor of $(t-s)r$; this partly explains why $t'-s'$ can be as large as $\log(40(t-s)r^2)$. We remark that the condition $t\leq (1+\frac{1}{r-1})s$ ensures that $\gamma-(r-1)t\geq sr-(r-1)t+r\geq r$, so the expected average degree of the vertices in $A'$ is not too small.

Now we take $A''$ to be the set of those vertices in $A'$ whose degree in $G[A'\cup B']$ is not much larger than expected (i.e. not much larger than $2^{\gamma-(r-1)t}$) and set $B''=\{v\in B': N_G(v)\subset A''\}$. By the strong concentration of the degrees, $G[A''\cup B'']$ has almost as many edges as $G[A'\cup B']$, so the average degree of a vertex in $A''$ is not much smaller than it was in $A'$. Hence, the desired conditions about $A''$ and $B''$ are satisfied (for suitable $s'\approx \gamma-(r-1)t$ and $t'\approx \gamma-(r-1)t$).

We conclude this outline by remarking that a slightly different argument works even if $G$ is not necessarily $C_4$-free, but the codegrees in $G$ are not extremely large.

\section{Preliminaries} \label{sec:preliminaries}
As we have mentioned in the outline of the proof, one of the conditions in our key lemma is that no two vertices in the graph have very large codegree. We prove that with the loss of a constant factor in the number of edges, we can pass to a subgraph satisfying this property. For this, we will use that we can assume that our graphs are $K_{k,k}$-free (or else, they contain a $k$-regular subgraph).

\begin{lemma} \label{lem:large degrees force K_kk}
    Let $k$ be a positive integer and let $H$ be a $K_{k,k}$-free bipartite graph with parts $S$ and $T$. Assume that $d_H(u)\geq k|T|^{1-1/k}$ holds for every $u\in S$. Then $e(H)\leq k|T|$.
\end{lemma}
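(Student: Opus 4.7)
My plan is to argue by contradiction: assume $e(H)>k|T|$, apply the standard K\H{o}v\'ari--S\'os--Tur\'an double-counting from the $T$-side, and combine it with the degree hypothesis via Jensen's inequality to derive a contradiction.

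First, the $K_{k,k}$-free condition gives $|\bigcap_{u\in U}N_H(u)|\le k-1$ for every $k$-subset $U\subseteq S$. Double-counting the pairs $(U,v)$ with $U\in\binom{S}{k}$ and $v\in\bigcap_{u\in U}N_H(u)$ then yields
\[
\sum_{v\in T}\binom{d_H(v)}{k}\ \le\ (k-1)\binom{|S|}{k}\ \le\ \frac{k-1}{k!}\,|S|^k.
\]

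Suppose $\bar{d}_T:=e(H)/|T|\ge k$. I would now combine Jensen's inequality for the convex function $x\mapsto\binom{x}{k}$ with the standard estimate $\binom{d}{k}\ge (d/k)^k$, valid for real $d\ge k$, to obtain the matching lower bound
\[
\sum_{v\in T}\binom{d_H(v)}{k}\ \ge\ |T|\binom{\bar{d}_T}{k}\ \ge\ |T|\left(\frac{\bar{d}_T}{k}\right)^k.
\]
The degree hypothesis, summed over $u\in S$, gives $e(H)\ge k|T|^{1-1/k}|S|$, so $|S|\le \bar{d}_T|T|^{1/k}/k$. Chaining the two bounds above with this estimate yields
\[
|T|\left(\frac{\bar{d}_T}{k}\right)^k\ \le\ \frac{k-1}{k!}\left(\frac{\bar{d}_T\,|T|^{1/k}}{k}\right)^k,
\]
which simplifies to $1\le (k-1)/k!$. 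This fails for every $k\ge 1$ (since $k!\ge k>k-1$), contradicting $\bar{d}_T\ge k$. Hence $\bar{d}_T<k$ and $e(H)<k|T|$.

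The only delicate point will be justifying the Jensen step, because the polynomial $\binom{x}{k}$ is genuinely convex only on $[k-1,\infty)$. I would handle this by passing to the piecewise-linear extension of the map $d\mapsto\binom{d}{k}$ on the non-negative integers --- which is globally convex because its discrete second difference equals $\binom{d}{k-2}\ge 0$ --- and observing that this extension dominates the polynomial $\binom{x}{k}$ on $[k-1,\infty)$. Under the contradiction hypothesis $\bar{d}_T\ge k>k-1$ we are in precisely this convex regime, so the Jensen step and the subsequent estimate $\binom{\bar{d}_T}{k}\ge (\bar{d}_T/k)^k$ are fully justified.
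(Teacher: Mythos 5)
Your proof is correct and follows essentially the same line as the paper's: both arguments bound $\sum_{v\in T}\binom{d_H(v)}{k}$ from below via Jensen together with the degree hypothesis, and from above via the $K_{k,k}$-free condition, and reach the same contradiction. The only cosmetic differences are that you invoke the K\H{o}v\'ari--S\'os--Tur\'an double count up front while the paper derives $\sum_{v\in T}\binom{d_H(v)}{k}\geq k\binom{|S|}{k}$ and finishes by pigeonhole, and your piecewise-linear-extension discussion of the Jensen step is simply the rigorous version of the paper's shorthand convention $\binom{x}{i}=0$ for $x<i$.
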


\begin{proof}
    Suppose that $e(H)>k|T|$. Then, by Jensen's inequality (where for $x<i$ we define ${x \choose i}=0$),
    $$\sum_{v\in T} \binom{d_H(v)}{k}\geq |T|\binom{e(H)/|T|}{k}\geq |T|\left(\frac{e(H)}{k|T|}\right)^k.$$
    By the degree condition we have $e(H)\geq k|S||T|^{1-1/k}$. Then $\frac{e(H)}{k|T|}\geq |S||T|^{-1/k}$ and therefore,
    $\sum_{v\in T} \binom{d_H(v)}{k} \geq |S|^k\geq k\binom{|S|}{k}$. This implies that there is some $R\subset S$ of size $k$ whose common neighbourhood in $T$ has size at least $k$. Then $H$ contains $K_{k,k}$, which is a contradiction.
\end{proof}

The proof of the next lemma is somewhat similar to that of Lemma 2.3 from \cite{MPS21}.

\begin{lemma} \label{lem:clean large codegrees}
    Let $k$ be a positive integer. Let $G$ be a $K_{k,k}$-free bipartite graph with parts $A$ and $B$ and assume that $d_G(u)\leq m$ holds for all $u\in A$. Then $G$ has a spanning subgraph $G'$ such that $e(G')\geq e(G)/(k+1)$ and $d_{G'}(u,u')\leq k m^{1-1/k}$ for any two distinct $u,u'\in A$.
\end{lemma}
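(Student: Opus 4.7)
The plan is to use Lemma~\ref{lem:large degrees force K_kk} to gain structural control over the high-codegree ``bad partners'' of each vertex, and then construct $G'$ by an iterative edge-deletion argument.

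For each $u\in A$, define
\[
B(u)\;=\;\{u'\in A\setminus\{u\}:d_G(u,u')>km^{1-1/k}\}.
\]
The first step is to show $|B(u)|\leq m^{1/k}$. I would consider the auxiliary bipartite graph $H_u$ with parts $S=B(u)$ and $T=N_G(u)$, joining $u'\in S$ to $v\in T$ precisely when $v\in N_G(u')$. Any $K_{k,k}$ in $H_u$, together with the vertex $u$, would produce a $K_{k+1,k}$ and hence a $K_{k,k}$ in $G$, so $H_u$ is $K_{k,k}$-free. Every $u'\in S$ has $d_{H_u}(u')=d_G(u,u')>km^{1-1/k}\geq k|T|^{1-1/k}$, so Lemma~\ref{lem:large degrees force K_kk} yields
\[
\sum_{u'\in B(u)}d_G(u,u')\;=\;e(H_u)\;\leq\;k|T|\;\leq\;km,
\]
and since each summand is strictly greater than $km^{1-1/k}$, we conclude $|B(u)|\leq m^{1/k}$.

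Next, to construct $G'$, I would assign each edge $uv$ a ``badness'' $\beta(uv)=|B(u)\cap N_G(v)|$, that is, the number of bad partners of $u$ that are also neighbours of $v$. The previous inequality rewrites as $\sum_{v\in N_G(u)}\beta(uv)\leq km$ for every $u\in A$. I would then iteratively delete edges from $G$: while some pair $\{u,u'\}\subseteq A$ still has codegree exceeding $km^{1-1/k}$, remove an edge $uv$ of maximum current badness and update the sets $B(u)$ and the badnesses accordingly. Removing $uv$ simultaneously decreases the codegree of every pair $\{u,u'\}$ with $u'\in B(u)\cap N_G(v)$, so a charging argument using the inequality $\sum_v\beta(uv)\leq km$ for each $u$ should show that at most $\tfrac{k}{k+1}\,e(G)$ edges get removed overall, yielding $e(G')\geq e(G)/(k+1)$.

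The main obstacle lies in making the charging argument of the second step tight. Since individual codegrees in $G$ may be as large as $m$, exceeding the threshold $km^{1-1/k}$ by the factor $m^{1/k}/k$, naive random thinning at rate $\tfrac{1}{k+1}$ cannot bring all codegrees below the threshold. The structural bound $\sum_{v}\beta(uv)\leq km$ from the first step --- which captures both the scarcity of bad partners and the total ``mass'' of problematic edges at each vertex --- is precisely the control that I would exploit to make the greedy deletion efficient and establish the claimed $\tfrac{1}{k+1}$ edge-retention rate.
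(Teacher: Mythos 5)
Your first step is sound and matches the paper's use of Lemma~\ref{lem:large degrees force K_kk}: the auxiliary graph $H_u$ between the high-codegree set $B(u)$ and $N_G(u)$ is $K_{k,k}$-free, its $S$-side degrees all exceed $k|T|^{1-1/k}$, and so $e(H_u)\leq k|T|$; this gives $\sum_{u'\in B(u)} d_G(u,u')\leq kd_G(u)$ (and hence $|B(u)|\leq m^{1/k}$, though that cardinality bound is not what is ultimately needed).

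However, the heart of the lemma --- constructing $G'$ and proving the edge-retention bound $e(G')\geq e(G)/(k+1)$ --- is not actually carried out. You hand it to a ``delete an edge of maximum badness'' greedy together with an unspecified charging argument, and you explicitly flag the charging as the main obstacle; that is a gap, not a proof. There is also a substantive reason to doubt that your plan, as stated, closes: the quantity you propose to charge against, $\sum_v \beta(uv)\leq km$, is weaker than what is needed, and it changes dynamically as edges are removed. The paper's proof avoids both problems by fixing an arbitrary ordering $u_1,\dots,u_n$ of $A$ and, at step $i$, deleting \emph{all} edges of the current graph $G_{i-1}$ between $N_{G_{i-1}}(u_i)$ and the \emph{later-indexed} vertices $u_j$ ($j>i$) whose current codegree with $u_i$ exceeds $km^{1-1/k}$; Lemma~\ref{lem:large degrees force K_kk} bounds the number of edges deleted at step $i$ by $kd_{G_{i-1}}(u_i)$. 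Because only edges at later-indexed vertices are ever removed, $d_{G_n}(u_i)=d_{G_{i-1}}(u_i)$, so the total number of deleted edges is at most $k\sum_i d_{G_n}(u_i)=k\,e(G_n)$, yielding $e(G_n)\geq e(G)/(k+1)$. This bulk deletion in a fixed vertex order, charged against the stable final degrees, is the organizing idea your proposal is missing; without it (or an equivalent), the edge-by-edge greedy has no evident route to the $1/(k+1)$ retention rate.
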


\begin{proof}
    Let $A=\{u_1,u_2,\dots,u_n\}$ and let $G_0=G$. Let us define spanning subgraphs $G_1,\dots,G_n$ of $G$ recursively as follows. Having defined $G_{i-1}$ for some $1\leq i\leq n$, let $T=N_{G_{i-1}}(u_i)$, let $S=\{u_j: j>i \text{ and } d_{G_{i-1}}(u_i,u_j)>k m^{1-1/k}\}$ and let $H=G_{i-1}[S\cup T]$. Since $H$ is a subgraph of $G$, it is $K_{k,k}$-free. Moreover, for any $u_j\in S$, $d_H(u_j)=d_{G_{i-1}}(u_i,u_j)>k m^{1-1/k}\geq k |T|^{1-1/k}$. Hence, by Lemma \ref{lem:large degrees force K_kk}, $e(H)\leq k|T|$. Let $G_i$ be the subgraph of $G_{i-1}$ obtained by deleting the edges in $H$. Then $e(G_{i-1})-e(G_i)=e(H)\leq kd_{G_{i-1}}(u_i)$.
    
    Observe that for any $i<j$, we have $d_{G_n}(u_i,u_j)\leq km^{1-1/k}$.
    Moreover, note that for any $1\leq i\leq n$, $d_{G_n}(u_i)=d_{G_{i-1}}(u_i)$, so
     $$e(G_n)=\sum_{i=1}^n d_{G_n}(u_i)=\sum_{i=1}^n d_{G_{i-1}}(u_i)\geq \sum_{i=1}^n (e(G_{i-1})-e(G_i))/k=(e(G_0)-e(G_n))/k.$$
    Thus, $e(G_n)\geq e(G_0)/(k+1)=e(G)/(k+1)$, which means that $G'=G_n$ satisfies the conditions described in the lemma.
\end{proof}

We now define two notions of ``near-regularity" that will be used in our proofs.

\begin{definition}
    A graph $G$ is called \emph{$K$-almost-regular} if the maximum degree of $G$ is at most $K$ times the minimum degree of $G$.
\end{definition}

\begin{definition} \label{def:almost-biregular}
    We say that a bipartite graph $G$ is \emph{$(L,d)$-almost-biregular} if the following hold. $G$ has parts $A$ and $B$, where $d_G(v)=d$ for every $v\in B$, and, writing $D=e(G)/|A|$, we have $D\geq d$ (equivalently $|A|\leq |B|$) and $d_G(u)\leq LD$ for every $u\in A$.
\end{definition}

Our key lemma will provide an $(L,d)$-almost-biregular subgraph where $L$ is fairly small. The next lemma allows us to find a large almost-regular subgraph in it.

\begin{lemma} \label{lem:almost bireg to almost reg}
    Let $G$ be an $(L,\delta)$-almost-biregular graph for some $L\geq \delta\geq 2$. Then $G$ has a $64$-almost-regular subgraph with average degree at least $\frac{\delta}{16\log L}$.
\end{lemma}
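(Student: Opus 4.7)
The plan is to reduce the variance of $A$-side degrees by dyadic bucketing, and then to extract a subgraph of large minimum degree via the standard iterative deletion procedure.

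First, since $e(G)=|A|D\ge |A|\delta$, the total number of edges incident to $A$-vertices of degree less than $\delta/(8\log L)$ is at most $|A|\cdot\delta/(8\log L)\le e(G)/(8\log L)$, so these may safely be discarded. We may furthermore assume $D\le L\delta$: the complementary case $D>L\delta$ (equivalently $|A|<|B|/L$) can be reduced to this one by first passing to a random subset $B'\subset B$ of size $\lceil L|A|\rceil$, after which (by Chernoff concentration) $G[A\cup B']$ is again $(L,\delta)$-almost-biregular but with new $D'\le L\delta$. Under these reductions the surviving $A$-degrees all lie in $[\delta/(8\log L),\,L^2\delta]$, an interval covered by only $O(\log L)$ dyadic powers of $2$. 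By pigeonhole, some dyadic bucket $A^*\subset A$ contains $\Omega(e(G)/\log L)$ edges, and every vertex of $A^*$ has degree in a factor-$2$ interval $(d^*,2d^*]$ with $d^*\ge \delta/(16\log L)$.

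Next, in the bipartite subgraph $G[A^*\cup B]$ I would apply the iterative min-degree extraction with threshold $\tau:=\delta/(16\log L)$: while some vertex has current degree less than $\tau$, delete it. Because $|A^*|+|B|\le 2|B|$ and each deletion removes fewer than $\tau$ edges, the total edge loss is at most $2|B|\tau = e(G)/(8\log L)$, which is much smaller than the $\Omega(e(G)/\log L)$ edges of $G[A^*\cup B]$. Hence the resulting subgraph $H$ satisfies $\delta(H)\ge \tau$ and $e(H)=\Omega(e(G)/\log L)$; in particular the average degree of $H$ is at least $\tau=\delta/(16\log L)$, matching the claimed bound.

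\textbf{Main obstacle.} The nontrivial part is to verify that $H$ is actually $64$-almost-regular, i.e.\ $\Delta(H)\le 64\tau = 4\delta/\log L$. The $A$-side degrees are bounded by $2d^*$ and can be further controlled by choosing the bucket appropriately, but the $B$-side degrees could still be as large as $\delta$, which vastly exceeds $4\delta/\log L$ whenever $L$ is not tiny. To handle this I would insert an additional step before the extraction: dyadically sub-bucket the $B$-vertices according to their current degree in $G[A^*\cup B]$, restricting to the range $(\tau,64\tau]$, which contains only $O(1)$ dyadic intervals. An averaging argument, using that the average $B$-side degree in $G[A^*\cup B]$ is $\Omega(\delta/\log L)$, combined with a Markov-type estimate to discard the $B$-vertices of degree above $64\tau$, shows that a constant fraction of the edges survive. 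Combined with the min-degree extraction, this produces the desired $64$-almost-regular subgraph with average degree at least $\delta/(16\log L)$.
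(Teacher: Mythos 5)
Your plan is a genuinely different route from the paper's: you attempt an elementary, self-contained argument via dyadic bucketing on both sides, whereas the paper's proof hinges on Lemma~\ref{lem:from L to 4 almost bireg} (a variant of Lemma~2.7 of Pyber--R\"odl--Szemer\'edi, used as a black box) to first reduce the almost-biregularity parameter from $L$ to $4$, and only then applies a random subsample of $B$ (Lemma~\ref{lem:find nearly reg subgraph}) together with the usual min-degree extraction. Avoiding that black box would be a nice simplification, but there is a genuine gap in your last step.

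The gap is in the Markov-type estimate for discarding the $B$-vertices of degree above $64\tau$. Markov's inequality applied to the $B$-degree distribution in $G[A^*\cup B]$ bounds the \emph{number} of $B$-vertices of degree at least $64\tau$, not the \emph{number of edges} incident to them. A $B$-vertex that survives the Markov threshold can still have degree up to $\delta$, which is a factor $\log L$ larger than $64\tau$, so deleting it destroys far more edges than the Markov count suggests. Concretely, suppose $A^*$ is such that a set $B_1\subset B$ of size $|B|/(2\log L)$ has all $\delta$ of its neighbours inside $A^*$, while every other $B$-vertex has only $O(1)$ neighbours in $A^*$. Then $e(G[A^*\cup B])\approx |B|\delta/(2\log L)$, which is $\Omega(e(G)/\log L)$ as required, but essentially \emph{all} of those edges are incident to $B$-vertices of degree $\delta>64\tau$. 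Discarding those vertices removes almost everything, and the remaining low-degree $B$-vertices are then eaten by the min-degree extraction, so the claim that "a constant fraction of the edges survive" fails. (In this example a $64$-almost-regular subgraph of the right density does exist, namely $G[A^*\cup B_1]$; the issue is that your filtering step throws it away rather than finding it.) A secondary issue, which you flag but do not resolve, is that the dyadic bucket $A^*$ selected by pigeonhole may satisfy $d^*\gg\delta/\log L$, in which case the $A^*$-side degrees already exceed $64\tau$ and the bucket must be rejected; it is not clear that a bucket simultaneously satisfying the edge-count, the $A$-degree ceiling, and the $B$-degree constraint exists. Repairing both issues is precisely the content of Lemma~\ref{lem:from L to 4 almost bireg}, whose proof is more delicate than a one-round pigeonhole on each side.
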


The proof of this lemma uses the following result, which is a slight variant of Lemma~2.7 from \cite{PRSz95}. It states that as long as $L$ is subexponential in the average degree, we can pass to a subgraph with large average degree and constant $L$.

\begin{lemma} \label{lem:from L to 4 almost bireg}
    Let $G$ be an $(L,\delta)$-almost-biregular graph. Suppose that $L\delta\leq 2^{\lfloor \delta/(d-1) \rfloor}$ and $d\leq \delta$. Then $G$ has a $(4,d)$-almost-biregular subgraph.
\end{lemma}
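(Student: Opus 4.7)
The plan is an iterative random-sampling argument. Set $T = \lceil \log_2(L/4) \rceil$, $L_i = \max(L/2^i,\, 4)$, and $\delta_i = \delta - i(d-1)$. The hypothesis $L\delta \leq 2^{\lfloor \delta/(d-1)\rfloor}$ rearranges to $T \leq \log_2(L\delta) \leq \lfloor \delta/(d-1)\rfloor$, which (with a small adjustment of the iteration count if necessary) is enough to guarantee $\delta_T \geq d$. I will construct a nested sequence $G = G_0 \supseteq G_1 \supseteq \cdots \supseteq G_T$ in which each $G_i$ is $(L_i,\delta_i)$-almost-biregular; then $G_T$ is the desired $(4,d)$-almost-biregular subgraph.

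One round (from $G_i$ to $G_{i+1}$) works as follows. Independently for each $v \in B_i$, pick a uniformly random subset $S_v \subseteq N_{G_i}(v)$ of size $\delta_{i+1}$, and let $G_i'$ be the spanning subgraph of $G_i$ with edge set $\{uv : v \in B_i,\, u \in S_v\}$. By construction, every $v \in B_i$ has degree exactly $\delta_{i+1}$ in $G_i'$. For each $u \in A_i$, the indicators ``$u \in S_v$'' are independent across $v \in N_{G_i}(u)$, so $d_{G_i'}(u)$ is a sum of $d_{G_i}(u)$ independent Bernoulli$(\delta_{i+1}/\delta_i)$ random variables. Writing $D_i' = (\delta_{i+1}/\delta_i)D_i$ for the new average $A$-degree, we have $\mathbb{E}[d_{G_i'}(u)] \leq L_i D_i' = 2L_{i+1} D_i'$. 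Now prune: put $A_{i+1} = \{u \in A_i : d_{G_i'}(u) \leq L_{i+1} D_i'\}$, $B_{i+1} = \{v \in B_i : S_v \subseteq A_{i+1}\}$, and $G_{i+1} = G_i'[A_{i+1} \cup B_{i+1}]$. Then every $v \in B_{i+1}$ still has degree $\delta_{i+1}$ and every $u \in A_{i+1}$ has degree $\leq L_{i+1} D_i'$.

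The analytical task is to show that $G_{i+1}$ retains enough edges for $|B_{i+1}| \geq |A_{i+1}|$ (i.e.\ $D_{i+1} \geq \delta_{i+1}$) and for $D_{i+1}$ to be within a constant factor of $D_i'$ (so that the degree cap $L_{i+1}D_i'$ is really within the $(L_{i+1},\delta_{i+1})$-almost-biregular budget, possibly after choosing a slightly more conservative threshold). This comes from two Chernoff estimates. For any fixed $u \in A_i$ with $\mathbb{E}[d_{G_i'}(u)] \leq L_{i+1} D_i'/2$, a multiplicative Chernoff bound yields $\Pr[u \notin A_{i+1}] \leq \exp(-\Omega(L_{i+1} D_i'))$; the hypothesis $L\delta \leq 2^{\lfloor\delta/(d-1)\rfloor}$ ensures $L_{i+1} D_i' = \Omega(\log|A_i|)$, so a union bound kills essentially no such $u$. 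The pruned vertices are therefore concentrated among the ``naturally large'' $u$'s, and their contribution to $\sum_u d_{G_i'}(u)$ is controlled by Markov on $\sum_{u \in A_i} d_{G_i}(u) = |A_i|D_i$. Second, the expected number of edges lost when passing from $B_i$ to $B_{i+1}$ is bounded by a first-moment expansion: each edge $uv \in G_i'$ is lost only if $u \notin A_{i+1}$, and linearity of expectation combined with the above tail bound gives $\mathbb{E}[e(G_{i+1})] \geq (1-o(1))\,e(G_i') = (1-o(1))(\delta_{i+1}/\delta_i)e(G_i)$, which is what is needed.

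The hardest step will be dealing cleanly with the dependence between the pruned set $A_i \setminus A_{i+1}$ and the neighborhoods $S_v$: membership in $A_{i+1}$ is driven by the very samples $S_v$ that define the neighborhoods. I expect to sidestep this by computing $\mathbb{E}[e(G_i') - e(G_{i+1})]$ directly via a double sum over edges $uv$, bounding ``$uv$ is lost'' by ``$u$ is deleted'' (whose probability is given by Chernoff for each fixed $u$). Taking the best realisation of the sampling completes one round; iterating $T$ rounds produces the required $(4,d)$-almost-biregular subgraph $G_T$.
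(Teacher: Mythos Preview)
The paper itself gives no proof of this lemma; it simply remarks that the argument is almost identical to Lemma~2.7 of \cite{PRSz95} and omits it. So there is nothing to compare against directly, and I assess your outline on its own merits. The iterative shape --- roughly $\log_2 L$ rounds, in each of which you thin the $B$-degrees from $\delta_i$ to $\delta_i-(d-1)$ and then cap the $A$-degrees --- is the right bookkeeping to match the hypothesis $L\delta\le 2^{\lfloor\delta/(d-1)\rfloor}$, and is indeed the skeleton of the Pyber--R\"odl--Szemer\'edi argument.

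However, two of your analytical claims are wrong. First, ``the hypothesis ensures $L_{i+1}D_i'=\Omega(\log|A_i|)$'' is false: the hypothesis involves only $L,\delta,d$ and says nothing about $|A|$, so no union bound over $A_i$ is available. This alone is repairable --- one should control the expected \emph{total degree} of pruned vertices rather than their number --- but it is a genuine error, not a detail.

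Second, and this is the real gap, your edge-loss accounting breaks down. The implication ``$uv$ is lost $\Rightarrow u\notin A_{i+1}$'' is false: the edge $uv$ dies as soon as \emph{any} $u'\in S_v$ is pruned, which introduces an extra factor of $\delta_{i+1}$ into the loss. More fundamentally, your ``Markov'' control on the naturally large vertices does not give what you need. With the very mild resampling $\delta_{i+1}/\delta_i\approx 1$, any $u$ with $d_{G_i}(u)\in\big((L_i/2)D_i,\,L_iD_i\big]$ has expected new degree at or above your cutoff $(L_i/2)D_i'$ and is therefore pruned with probability bounded away from~$0$. Nothing prevents such vertices from carrying a constant fraction of all edges of $G_i$: for instance, let a $2/L_i$ fraction of $A_i$ have degree $(L_i/2)D_i$ and the remaining vertices have degree~$1$. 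Then essentially every $v\in B_i$ sees a pruned neighbour, $B_{i+1}$ is almost empty, and the round destroys the graph. In short, trying to \emph{halve} $L$ in one round by pruning at $(L_i/2)D_i'$ is too aggressive; the reduction in the regularity constant cannot come purely from chopping off the top of the $A$-degree distribution, and your outline does not supply the missing mechanism.
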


\noindent
Since the proof of this result is almost identical to that of Lemma 2.7 from \cite{PRSz95}, it is omitted here. 

The next simple lemma shows that in an almost-biregular graph we can find an almost-regular subgraph with similar average degree.

\begin{lemma} \label{lem:find nearly reg subgraph}
    Let $L,d\geq 1$ and let $G$ be an $(L,d)$-almost-biregular graph. Then $G$ has a non-empty subgraph $G'$ with average degree at least $d/2$ and maximum degree at most $4Ld$.
\end{lemma}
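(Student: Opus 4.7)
The natural approach is a random restriction of $B$ followed by a vertex deletion. Since $|A|\leq |B|$, I would sample $B'\subseteq B$ uniformly at random with $|B'|=|A|$ and pass to $G[A\cup B']$; every $v\in B'$ retains degree exactly $d$ there. For each $u\in A$, the random variable $X_u:=d_{G[A\cup B']}(u)$ is hypergeometric with mean $\mu_u=d_G(u)\cdot|A|/|B|$, so $\sum_{u\in A}\mu_u=e(G)|A|/|B|=d|A|$ (giving expected edge count $d|A|$), while the uniform bound $d_G(u)\leq LD$ yields $\mu_u\leq LD\cdot|A|/|B|=Ld$.

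The key step is to control in expectation the edges incident to the ``bad'' vertices $u\in A$ with $X_u>4Ld$. Using the trivial inequality $X_u\mathbf{1}[X_u>4Ld]\leq X_u^2/(4Ld)$, combined with the standard variance bound $\mathrm{Var}(X_u)\leq \mu_u$ for hypergeometric random variables and $\mu_u\leq Ld$, one gets
\[
E\bigl[X_u\mathbf{1}[X_u>4Ld]\bigr]\leq \frac{E[X_u^2]}{4Ld}\leq \frac{\mu_u+\mu_u^2}{4Ld}\leq \frac{\mu_u(1+Ld)}{4Ld}\leq \frac{\mu_u}{2},
\]
where the last inequality uses $Ld\geq 1$. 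Summing over $u\in A$ gives $E\bigl[\sum_{u\in A}X_u\mathbf{1}[X_u>4Ld]\bigr]\leq d|A|/2$.

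Consequently $E\bigl[\sum_{u\in A}X_u\mathbf{1}[X_u\leq 4Ld]\bigr]\geq d|A|/2$, so some realization of $B'$ achieves this bound. Taking $A'=\{u\in A:X_u\leq 4Ld\}$ and $G'=G[A'\cup B']$, we obtain $e(G')\geq d|A|/2$; by construction every $u\in A'$ has degree at most $4Ld$ in $G'$, and every $v\in B'$ has degree at most $d\leq 4Ld$, so the maximum degree is at most $4Ld$; finally $|V(G')|\leq |A|+|B'|=2|A|$, hence the average degree of $G'$ is at least $d/2$, and $G'$ is non-empty since $d|A|/2\geq 1/2>0$. I expect the only mildly delicate point to be the second-moment estimate above, but it is routine given the standard hypergeometric variance.
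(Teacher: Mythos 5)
Your proof is correct and takes a genuinely different route from the paper, though both live inside the same general framework of randomly thinning $B$ and then discarding the contribution of overly-popular vertices in $A$. The paper keeps each $v\in B$ independently with probability $d/D$, deletes the bad \emph{edges} $uv$ with $|N_G(u)\cap B'|\geq 4Ld$, and bounds the expected number of deleted edges by conditioning on $v\in B'$ and applying Markov's inequality to the conditional first moment $\mathbb{E}[|N_G(u)\cap B'|\mid v\in B']\leq 1+Ld\leq 2Ld$. You instead draw $B'$ uniformly at random of size exactly $|A|$, delete the bad \emph{vertices} of $A$, and control the expected loss through a second-moment estimate, invoking the hypergeometric variance bound $\mathrm{Var}(X_u)\leq\mu_u$ (which does hold: $\mathrm{Var}(X_u)=\mu_u\bigl(1-\tfrac{K}{N}\bigr)\tfrac{N-n}{N-1}\leq\mu_u$, with the degenerate cases $|A|=|B|$ or $|B|\leq 1$ giving variance $0$). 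Your choice of a fixed-size sample has the small virtue of making $|V(G')|\leq 2|A|$ deterministic, so the final averaging step needs no expectation over $|B'|$; the paper's conditioning trick is marginally more self-contained since it never touches a variance formula. Both reach the same constants via elementary means, and your computation $E[X_u\mathbf{1}[X_u>4Ld]]\leq(\mu_u+\mu_u^2)/(4Ld)\leq\mu_u/2$ (using $\mu_u\leq Ld$ and $Ld\geq 1$) checks out.
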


\begin{proof}
    Choose $A$, $B$ and $D$ according to Definition \ref{def:almost-biregular}. Define $B'$ to be a random subset of $B$ where each $v\in B$ is kept independently with probability $\frac{d}{D}$. Let $G'$ be the subgraph of $G[A\cup B']$ obtained by deleting all edges $uv$ with $|N_G(u)\cap B'|\geq 4Ld$. Clearly, $\Delta(G')\leq \max(4Ld,d)=4Ld$.
    
    Let $X=e(G[A\cup B'])$ and let $Y$ be the number of edges $uv\in E(G[A\cup B'])$ with $|N_G(u)\cap B'|\geq 4Ld$. Now
    $$\mathbb{E}[X]=\sum_{uv\in E(G)} \mathbb{P}(v\in B')=e(G)\frac{d}{D}$$
    and
    \begin{align*}
        \mathbb{E}[Y]
        &=\sum_{uv\in E(G)} \mathbb{P}(v\in B' \text{ and } |N_G(u)\cap B'|\geq 4Ld) \\
        &=\sum_{uv\in E(G)} \mathbb{P}(v\in B')\mathbb{P}\left(|N_G(u)\cap B'|\geq 4Ld \hspace{1mm} | \hspace{1mm} v\in B'\right).
    \end{align*}
    For any $uv\in E(G)$, we have $$\mathbb{E}[|N_G(u)\cap B'| \hspace{1mm} | \hspace{1mm} v\in B']=1+(d_G(u)-1)\frac{d}{D}\leq 1+LD\frac{d}{D}= 1+Ld\leq 2Ld,$$ so it follows by Markov's inequality that
    $$\mathbb{P}\left(|N_G(u)\cap B'|\geq 4Ld \hspace{1mm} | \hspace{1mm} v\in B'\right)\leq 1/2.$$
    Hence,
    $$\mathbb{E}[Y]\leq \sum_{uv\in E(G)}\mathbb{P}(v\in B')/2=e(G)\frac{d}{2D}.$$
    Thus, using $|A|D=|B|d=e(G)$,
    \begin{align*}
        \mathbb{E}[X-Y-(|A|+|B'|)d/4]
        &\geq e(G)\frac{d}{D}-e(G)\frac{d}{2D}-|A|d/4-|B|\frac{d}{D}d/4 \\
        &= e(G)\frac{d}{D}-e(G)\frac{d}{2D}-e(G)\frac{d}{4D}-e(G)\frac{d}{4D}=0.
    \end{align*}
    In particular, there is an outcome for which $X-Y\geq (|A|+|B'|)d/4$. Since $e(G')=X-Y$, this means that $G'$ has average degree at least $d/2$.
\end{proof}

Lemma \ref{lem:almost bireg to almost reg} can now be proven using the last two lemmas.

\begin{proof}[Proof of Lemma \ref{lem:almost bireg to almost reg}]
    Let $d=\lceil \frac{\delta}{4\log L}\rceil$. Now $\delta/(d-1)\geq 4\log L$, so $\lfloor \delta/(d-1)\rfloor \geq 2\log L$. Hence, $2^{\lfloor \delta/(d-1)\rfloor}\geq L^2\geq L\delta$. By Lemma \ref{lem:from L to 4 almost bireg}, $G$ has a $(4,d)$-almost-biregular subgraph $G'$. By Lemma \ref{lem:find nearly reg subgraph}, $G'$ has a subgraph with average degree at least $d/2$ and maximum degree at most $16d$. Repeatedly discarding vertices of degree less than $d/4$, we end up with a non-empty subgraph $G''$ with minimum degree at least $d/4$ and maximum degree at most $16d$. Clearly, $G''$ is $64$-almost-regular. Moreover, the average degree of $G''$ is at least $d/4\geq \frac{\delta}{16\log L}$.
\end{proof}

In order to find a $k$-regular subgraph in an almost-regular graph with sufficiently large average degree, one can use the result of Pyber, R\"odl and Szemer\'edi.

\begin{theorem}[Pyber--R\"odl--Szemer\'edi \cite{PRSz95}] \label{thm:PRSz}
    For any positive integer $k$, there is a constant $C=C(k)$ such that any graph with maximum degree $\Delta$ and average degree at least $C \log \Delta$ contains a $k$-regular subgraph.
\end{theorem}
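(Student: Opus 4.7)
The plan is to combine the compression-and-cleanup machinery from Section~\ref{sec:preliminaries} (in particular Lemmas~\ref{lem:from L to 4 almost bireg} and~\ref{lem:find nearly reg subgraph}) with the algebraic theorem of Alon, Friedland and Kalai, which guarantees a $q$-regular subgraph inside any graph whose maximum degree is close to $2q$ and whose average degree exceeds $2q-2$. The overall strategy is to reduce the given graph to an almost-regular subgraph of bounded maximum degree (depending only on $k$) and sufficiently large minimum degree, and then to invoke Alon--Friedland--Kalai.

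First, I would pass to an almost-biregular bipartite subgraph. A standard max-cut argument loses at most a factor of two in the number of edges, and iterative deletion of low-degree vertices produces a bipartite subgraph of large minimum degree. Dyadic bucketing by degree on one side, followed by thinning each such vertex's neighbourhood down to a common size, then yields an $(L,\delta)$-almost-biregular subgraph with $L \leq \Delta$ and $\delta \geq C_1 \log\Delta$, where $C_1 = C_1(C)$ can be made arbitrarily large by enlarging the constant $C$ in the hypothesis.

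Next, I would apply Lemma~\ref{lem:from L to 4 almost bireg} with $d$ chosen to be a suitable constant depending only on $k$. The hypothesis $L\delta \leq 2^{\lfloor \delta/(d-1)\rfloor}$ amounts to requiring $\delta \geq (d-1)\log(L\delta)$, which is satisfied for $C_1$ large enough since $L\delta \leq \Delta^2$. The output is a $(4,d)$-almost-biregular subgraph, to which Lemma~\ref{lem:find nearly reg subgraph} produces a further subgraph with average degree at least $d/2$ and maximum degree at most $16d$. Iteratively removing vertices of degree below $d/4$ yields a graph $H$ with minimum degree at least $d/4$ and maximum degree at most $16d$, both constants depending only on $k$.

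Finally, to extract a $k$-regular subgraph from $H$, I would invoke the theorem of Alon, Friedland and Kalai. Choosing a prime power $q \geq k$ and tuning $d$ so that, after one additional edge-pruning step to bring the maximum degree into the window $[2q-1,2q+1]$ (which is feasible because $H$ has enough excess edges to absorb the loss), the resulting graph has average degree greater than $2q-2$ and maximum degree at most $2q+1$. Alon--Friedland--Kalai then produces a $q$-regular subgraph, inside which a $k$-regular subgraph can be located by standard arguments. The main obstacle is this final algebraic step, whose narrow parameter window requires careful choice of constants and ultimately rests on the Chevalley--Warning theorem; the combinatorial reductions preceding it are essentially routine given the lemmas of Section~\ref{sec:preliminaries}.
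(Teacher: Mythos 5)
The paper does not actually prove this statement; it is quoted as a black box from \cite{PRSz95}, so there is no internal proof to compare against. Judging your sketch on its own merits: your steps up through producing a $64$-almost-regular subgraph with large minimum degree are sound, and in fact they almost verbatim reproduce the paper's Lemma~\ref{lem:almost bireg to almost reg} (which chains Lemma~\ref{lem:from L to 4 almost bireg} and Lemma~\ref{lem:find nearly reg subgraph} and then iteratively deletes low-degree vertices). The problem is your final step.

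The Alon--Friedland--Kalai theorem you invoke requires a graph whose maximum degree $\Delta$ and average degree $\bar d$ satisfy $\Delta \le 2q-1$ and $\bar d > 2q-2$ for a prime power $q$; that forces $\Delta/\bar d < 1 + \tfrac{1}{2q-2}$, i.e.\ essentially exact regularity. After your cleanup you have $\Delta \le 16d$ and $\bar d \ge d/4$ --- a ratio of $64$. You assert this gap can be closed by ``one additional edge-pruning step,'' but this is precisely where the real difficulty of Pyber's and Pyber--R\"odl--Szemer\'edi's arguments lives. There is no generic pruning that brings $\Delta$ into the window $[2q-1,2q+1]$ while keeping $\bar d > 2q-2$: if you pick $q$ small, you must delete almost all edges incident to high-degree vertices and this cascades (each deleted edge also lowers the degree of its other endpoint, possibly pushing it below $2q-2$); if you pick $q$ large enough that $2q-1 \ge 16d$, then $\bar d > 2q-2 \ge 16d-2$ is impossible since $\bar d \le \Delta \le 16d$. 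Random sparsification fares no better, since it preserves the max/average ratio. (Your window $[2q-1,2q+1]$ is also internally inconsistent: AFK needs $\Delta \le 2q-1$, not $\le 2q+1$.) In short, going from ``$K$-almost-regular with large degree'' to ``$k$-regular'' is exactly the content of Theorem~\ref{thm:PRSz}, and your step~5 is not a proof of it but a restatement of the gap. You would need to reconstruct the genuinely delicate iterative AFK argument of \cite{Pyb85} and \cite{PRSz95}, which is not a routine refinement of Lemmas~\ref{lem:from L to 4 almost bireg}--\ref{lem:find nearly reg subgraph}.
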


\section{The key lemma} \label{sec:key lemma}

In this section, we prove the following lemma, which is the main ingredient in our proof. Given a bipartite graph with parts $A$ and $B$ in which $B$ is regular, the maximum degree in $A$ is at most about $1+\eps$ power of the average degree in $A$ and the codegrees are not extremely large, the lemma provides an induced subgraph with much better regularity properties. This condition bounding the maximum degree in terms of the average degree of $A$ is hidden in the upper bound on the codegrees: for the codegree condition to hold, we must have $t\leq (1+\frac{1}{r-1})s$.

\begin{lemma} \label{lem:random regularization}
	Let $r,s,t$ be positive integers such that $s<t$. Let $G$ be a bipartite graph with parts $A$ and $B$ such that $d_G(v)=r$ for every $v\in B$, $d_G(u)\leq 2^t$ for every $u\in A$, $d_G(u,u')\leq 2^{rs-(r-1)t}$ for any two distinct $u,u'\in A$ and $e(G)\geq 2^s|A|$.
    
    Then there are subsets $A''\subset A$ and $B''\subset B$ such that $N_G(v)\subset A''$ for every $v\in B''$ and, writing $G'=G[A''\cup B'']$ and $d'=e(G')/|A''|$, we have $d'\geq \frac{2^{rs-(r-1)t}}{10(t-s)r}$ and $d_{G'}(u)\leq 40(t-s)r^2 d'$ for all $u\in A''$.
\end{lemma}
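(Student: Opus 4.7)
Following the sketch, I would first partition $A$ by degree: set $A_{s+1}=\{u\in A:d_G(u)\le 2^{s+1}\}$, $A_i=\{u\in A:2^{i-1}<d_G(u)\le 2^i\}$ for $s+2\le i\le t$, and write $\alpha(u)=i$ when $u\in A_i$. For $v\in B$ set $\beta(v)=\sum_{u\in N_G(v)}\alpha(u)\in[(s+1)r,tr]$; since $\beta$ takes at most $r(t-s)$ integer values, pigeonhole supplies some $\gamma\ge(s+1)r$ and $\tilde B\subseteq B$ with $|\tilde B|\ge|B|/(r(t-s))$ on which $\beta\equiv\gamma$. This $r(t-s)$ loss is exactly the source of the $10r(t-s)$ factor in the target $d'_0:=2^{rs-(r-1)t}/(10r(t-s))$. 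Next I would sample $A'\subseteq A$ by keeping each $u$ independently with probability $p(u)=2^{\alpha(u)-t}$ and set $B'=\{v\in\tilde B:N_G(v)\subseteq A'\}$, so that each $v\in B'$ automatically has degree exactly $r$ in $G[A'\cup B']$.

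\textbf{First moment and the codegree-based second moment.} Two first-moment identities drive the argument. Each $v\in\tilde B$ lies in $B'$ with probability $\prod_{u\in N_G(v)}2^{\alpha(u)-t}=2^{\gamma-rt}$, giving $\mathbb{E}\,e(G[A'\cup B'])=r|\tilde B|\,2^{\gamma-rt}$; combined with $\mathbb{E}|A'|=\sum_u 2^{\alpha(u)-t}\le 4 e(G)\cdot 2^{-t}$ (which uses $d_G(u)>2^{\alpha(u)-1}$ for $\alpha(u)\ge s+2$ together with $e(G)\ge 2^s|A|$), the ratio $\mathbb{E}\,e(G[A'\cup B'])/\mathbb{E}|A'|$ exceeds $d'_0$ by a factor of at least $2^{r-2}$, using $\gamma\ge(s+1)r$. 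Second, for any fixed $u\in A$, conditional on $u\in A'$, $d_{G[A'\cup B']}(u)$ is a sum over $v\in N_G(u)\cap\tilde B$ of indicators with parameter $2^{\gamma-\alpha(u)-(r-1)t}$, so its conditional mean is at most $d_G(u)\cdot 2^{\gamma-\alpha(u)-(r-1)t}\le 2^{\gamma-(r-1)t}$. Upgrading this to a uniform high-probability bound on $d_{G[A'\cup B']}(u)$ is the main technical step. If $G$ were $C_4$-free the indicators would be independent and Chernoff would finish the job; in general, pairs $v,v'\in N_G(u)\cap\tilde B$ contribute an extra correlation factor $\prod_{w\in N_G(v)\cap N_G(v')\setminus\{u\}}2^{t-\alpha(w)}$, and computing the conditional second moment and regrouping the pair sum by a shared common neighbour $u''\in N_G(v)\cap N_G(v')$ reduces everything to a quantity controlled by the codegrees $d_G(u,u'')$. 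The threshold $2^{rs-(r-1)t}$ in the codegree hypothesis is calibrated precisely so that this correlated contribution is a constant multiple (depending on $r$) of the square of the first moment; this codegree bookkeeping is the step I expect to be the main obstacle.

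\textbf{Cleaning to produce $A''$ and $B''$.} With concentration in hand, set $M:=C_r\cdot 2^{\gamma-(r-1)t}$ for a suitable constant $C_r$ depending only on $r$, then $A''=\{u\in A':d_{G[A'\cup B']}(u)\le M\}$ and $B''=\{v\in B':N_G(v)\subseteq A''\}$. Each bad $u\in A'\setminus A''$ destroys at most $(r+1)\,d_{G[A'\cup B']}(u)$ edges of $G[A'\cup B']$, since it kills its own incident edges and, through each of its $r$ neighbours in $B'$, at most $r$ further edges. The second-moment estimate above bounds the expected total of these losses by a small constant fraction of $\mathbb{E}\,e(G[A'\cup B'])$, so a union bound yields an outcome on which simultaneously $e(G[A''\cup B''])\ge c\,\mathbb{E}\,e(G[A'\cup B'])$ for some $c>0$, $|A''|\le|A'|\le 2\mathbb{E}|A'|$, and every $u\in A''$ has $d_{G[A''\cup B'']}(u)\le M$. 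For such an outcome, the first-moment comparison gives $d'=e(G[A''\cup B''])/|A''|\ge d'_0$, while choosing $C_r$ so as to absorb the $2^{\gamma-rs}\ge 2^r$ slack ensures $M\le 40r^2(t-s)\,d'$, delivering both conclusions required by the lemma.
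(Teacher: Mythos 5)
Your setup (degree bucketing, $\alpha$, $\beta$, pigeonholing to $\tilde{B}$ with $\beta\equiv\gamma$, sampling $A'$ with probability $2^{\alpha(u)-t}$, forming $B'$, then trimming high-degree vertices) matches the paper exactly, and your first-moment computations ($\mathbb{P}(v\in B')=2^{\gamma-rt}$, $\mathbb{E}|A'|\leq 4e(G)2^{-t}$, conditional mean degree $\leq 2^{\gamma-(r-1)t}$ up to a factor) are the ones the paper uses. However, the core of your argument goes down a route that is both harder than necessary and, as written, has a real gap.

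First, where you say the main obstacle is ``upgrading to a uniform high-probability bound on $d_{G[A'\cup B']}(u)$'' via a conditional second-moment/concentration calculation, the paper does not prove concentration at all. It only needs a \emph{first} moment: for each edge $uv$ with $v\in\tilde{B}$, it bounds $\mathbb{E}\bigl[\,|N_G(u)\cap B'|\ \big|\ v\in B'\,\bigr]$, splitting the sum over $w\in N_G(u)\cap\tilde{B}$ into those with $N_G(w)\cap N_G(v)=\{u\}$ (each contributing probability $2^{\gamma-\alpha(u)-(r-1)t}$) and the at most $r\cdot 2^{rs-(r-1)t}$ remaining $w$'s (each contributing at most $1$, their \emph{number} being what the codegree hypothesis bounds). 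Since $\gamma\geq (s+1)r$, both sums are at most $2^{\gamma-(r-1)t}$, and a single application of Markov on this conditional first moment gives $\mathbb{P}\bigl(|N_G(u)\cap B'|\geq 4r2^{\gamma-(r-1)t}\ \big|\ v\in B'\bigr)\leq 1/(2r)$. No second moment, no Chernoff, no case split on $C_4$-freeness is needed — the codegree condition enters only as a count, not as a correlation term.

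Second, the cleaning step as you describe it does not close. You assert that ``a union bound yields an outcome on which simultaneously $e(G[A''\cup B''])\geq c\,\mathbb{E}e(G[A'\cup B'])$, $|A'|\leq 2\mathbb{E}|A'|$, and every $u\in A''$ has degree $\leq M$.'' But having $\mathbb{E}[\text{loss}]\leq \frac{1}{2}\mathbb{E}[\text{gain}]$ does not give a lower tail bound on $e(G[A''\cup B''])$, and even a Chebyshev bound from your second moment would not survive a union bound over the (potentially huge) set $A'$. The paper sidesteps this entirely: setting $X=e(G[A'\cup B'])$ and $Y$ the number of edges $uv$ with $|N_G(u)\cap B'|\geq 4r2^{\gamma-(r-1)t}$, one has $e(G')\geq X-rY$, and the key inequality is the \emph{single} linear-combination expectation
$\mathbb{E}\bigl[X - rY - |A'|\cdot\tfrac{2^{\gamma-(r-1)t}}{10(t-s)r}\bigr] > 0$,
from which one outcome with the desired sign is extracted; the degree bound $d_{G'}(u)\leq 4r2^{\gamma-(r-1)t}$ then holds deterministically by the definition of $A''$. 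Your proposal would need either this linear-combination trick or a genuine (and not obviously available) lower-tail concentration, and as written it relies on a union bound that does not deliver what is claimed.
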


\begin{proof}
Let $A_{s+1}=\{u\in A:d_G(u)\leq 2^{s+1}\}$ and for each $s+2\leq i\leq t$, let $A_i=\{u\in A: 2^{i-1}<d_G(u)\leq 2^{i}\}$. Observe that the sets $A_{s+1},\dots,A_t$ partition $A$. For each $u\in A$, let $\alpha(u)$ be the unique $i$ with $u\in A_i$. For $v\in B$, let
$$\beta(v)=\sum_{u\in N_G(v)} \alpha(u).$$
Note that for every $v\in B$, we have $(s+1)r\leq \beta(v)\leq tr$, so there are at most $(t-s)r$ possible values for $\beta(v)$. Hence, by the pigeon hole principle, there are some $\gamma\geq (s+1)r$ and a subset $\tilde{B}\subset B$ of size at least $\frac{|B|}{(t-s)r}$ such that $\beta(v)=\gamma$ for all $v\in \tilde{B}$.

Let $A'$ be a random subset of $A$ where each $u\in A$ is kept independently with probability $2^{\alpha(u)-t}$. Let $B'=\{v\in \tilde{B}: N_G(v)\subset A'\}$. 
Let $A''$ be the subset of $A'$ consisting of those vertices $u$ with $|N_G(u)\cap B'|\leq 4r2^{\gamma-(r-1)t}$ and let $B''=\{v\in B': N_G(v)\subset A''\}$. Let $G'=G[A''\cup B'']$. 
Then $d_{G'}(u)\leq 4r2^{\gamma-(r-1)t}$ for all $u\in A''$.

The following claim shows that we expect $E(G[A'\cup B'])\setminus E(G')$ to be small.

\medskip

\noindent \emph{Claim.} For any $uv\in E(G[A\cup \tilde{B}])$,
$$\mathbb{P}(u\in A',v\in B' \text{ and } |N_G(u)\cap B'|\geq 4r2^{\gamma-(r-1)t})\leq \mathbb{P}(v\in B')/(2r).$$

\medskip

\noindent \emph{Proof of Claim.} Note that
\begin{align}
    &\mathbb{E}[|N_G(u)\cap B'| \hspace{1mm} | \hspace{1mm} v\in B']
    =\sum_{w\in N_G(u)\cap \tilde{B}} \mathbb{P}(w\in B' \hspace{1mm} | \hspace{1mm} v\in B') \nonumber \\
    &= \sum_{\substack{w\in N_G(u)\cap \tilde{B}: \\ N_G(w)\cap N_G(v)=\{u\}}} \mathbb{P}(w\in B' \hspace{1mm} | \hspace{1mm} v\in B') + \sum_{\substack{w\in N_G(u)\cap \tilde{B}: \\ N_G(w)\cap N_G(v)\neq \{u\}}} \mathbb{P}(w\in B' \hspace{1mm} | \hspace{1mm} v\in B'). \label{eqn:conditional expectation}
\end{align}
We now bound the two sums separately.
If some $w\in N_G(u)\cap \tilde{B}$ satisfies $N_G(w)\cap N_G(v)=\{u\}$, then \begin{align*}
    \mathbb{P}(w\in B' \hspace{1mm} | \hspace{1mm} v\in B')
    &=\mathbb{P}(w\in B' \hspace{1mm} | \hspace{1mm} u\in A')=\prod_{z\in N_G(w)\setminus \{u\}} \mathbb{P}(z\in A')=\prod_{z\in N_G(w)\setminus \{u\}} 2^{\alpha(z)-t} \nonumber \\
    &=2^{\sum_{z\in N_G(w)\setminus \{u\}} \alpha(z)-(r-1)t}=2^{\beta(w)-\alpha(u)-(r-1)t}=2^{\gamma-\alpha(u)-(r-1)t}.
\end{align*}
Since $d_G(u) \leq 2^{\alpha(u)}$, we have that
$$\sum_{\substack{w\in N_G(u)\cap \tilde{B}: \\ N_G(w)\cap N_G(v)=\{u\}}} \mathbb{P}(w\in B' \hspace{1mm} | \hspace{1mm} v\in B')\leq d_G(u)2^{\gamma-\alpha(u)-(r-1)t}\leq 2^{\gamma-(r-1)t}.$$
On the other hand, the number of $w\in N_G(u)\cap \tilde{B}$ with $N_G(w)\cap N_G(v)\neq \{u\}$ is at most $r2^{rs-(r-1)t}$. Indeed, $v$ has $r$ neighbours in $G$, so there are at most $r$ ways to choose a vertex $u'\in N_G(w)\cap N_G(v)\setminus \{u\}$, and, by assumption, any such $u'$ has at most $2^{rs-(r-1)t}$ common neighbours with $u$. Now using $\gamma\geq (s+1)r$, we have $2^{\gamma-(r-1)t}\geq 2^{r}2^{rs-(r-1)t}\geq r2^{rs-(r-1)t}$. This implies that
$$\sum_{\substack{w\in N_G(u)\cap \tilde{B}: \\ N_G(w)\cap N_G(v)\neq \{u\}}} \mathbb{P}(w\in B' \hspace{1mm} | \hspace{1mm} v\in B')\leq r2^{rs-(r-1)t}\leq 2^{\gamma-(r-1)t}.$$

Thus, by (\ref{eqn:conditional expectation}),
$$\mathbb{E}[|N_G(u)\cap B'| \hspace{1mm} | \hspace{1mm} v\in B']\leq 2^{\gamma-(r-1)t+1}.$$
This implies, by Markov's inequality, that
$$\mathbb{P}[|N_G(u)\cap B'|\geq 4r2^{\gamma-(r-1)t} \hspace{1mm} | \hspace{1mm} v\in B']\leq 1/(2r).$$
Thus,
\begin{align*}
	&\mathbb{P}(u\in A',v\in B' \text{ and } |N_G(u)\cap B'|\geq 4r2^{\gamma-(r-1)t}) \\
	&=\mathbb{P}(v\in B' \text{ and } |N_G(u)\cap B'|\geq 4r2^{\gamma-(r-1)t}) \\
	&=\mathbb{P}(v\in B')\mathbb{P}(|N_G(u)\cap B'|\geq 4r2^{\gamma-(r-1)t} \hspace{1mm} | \hspace{1mm} v\in B')\leq \mathbb{P}(v\in B')/(2r),
\end{align*}
completing the proof of the claim. $\Box$

\medskip

For any $uv\in E(G[A\cup \tilde{B}])$,
\begin{equation}
    \mathbb{P}(u\in A', v\in B')=\mathbb{P}(v\in B')=\prod_{w\in N_G(v)} \mathbb{P}(w\in A')=\prod_{w\in N_G(v)} 2^{\alpha(w)-t}=2^{\gamma-rt}. \label{eqn:prob of edge}
\end{equation}

Let $X=e(G[A'\cup B'])$ and let $Y$ be the number of edges $uv\in E(G[A'\cup B'])$ with $|N_G(u)\cap B'|\geq 4r2^{\gamma-(r-1)t}$. Note that $e(G')\geq X-rY$. Indeed, $E(G')$ is obtained from $E(G[A'\cup B'])$ by deleting all edges incident to vertices $v\in B'$ which have a neighbour $u$ with $|N_G(u)\cap B'|> 4r2^{\gamma-(r-1)t}$. Clearly, there are at most $Y$ such vertices $v\in B'$, so at most $rY$ edges are deleted. By equation (\ref{eqn:prob of edge}) and the Claim, we have
\begin{align*}
    &\mathbb{E}[X-rY] \\
    &=\sum_{uv\in E(G[A\cup \tilde{B}])} \left(\mathbb{P}(u\in A', v\in B')-r\mathbb{P}(u\in A', v\in B' \text{ and } |N_G(u)\cap B'|\geq 4r2^{\gamma-(r-1)t})\right) \\
    &\geq \sum_{uv\in E(G[A\cup \tilde{B}])} (\mathbb{P}(v\in B')-\mathbb{P}(v\in B')/2) = e(G[A\cup \tilde{B}])2^{\gamma-rt-1} \\
    &=|\tilde{B}|r2^{\gamma-rt-1}\geq \frac{|B|}{(t-s)r}r 2^{\gamma-rt-1}= \frac{e(G)}{(t-s)r}2^{\gamma-rt-1}.
\end{align*}

Note that for every $u\in A$, we have $2^{\alpha(u)}\leq 2^{s+1}+2d_G(u)$. Indeed, this is trivial if $\alpha(u)=s+1$, and else $d_G(u)\geq 2^{\alpha(u)-1}$. Also recall that $e(G)\geq 2^s|A|$. Therefore,
\begin{align*}
    \mathbb{E}\lbrack |A'|\rbrack
    &=\sum_{u\in A}\mathbb{P}(u\in A')=\sum_{u\in A} 2^{\alpha(u)-t}\leq 2^{-t}\sum_{u\in A} (2^{s+1}+2d_G(u)) \\
    &=2^{-t}(|A|2^{s+1}+2e(G))\leq 2^{-t}4e(G).
\end{align*}
Hence,
$$\mathbb{E}\left[X-rY-|A'|\frac{2^{\gamma-(r-1)t}}{10(t-s)r}\right]>0.$$

It follows that there exists an outcome for which $X-rY-|A'|\frac{2^{\gamma-(r-1)t}}{10(t-s)r}>0$. Then $e(G')\geq X-rY\geq |A'|\frac{2^{\gamma-(r-1)t}}{10(t-s)r}$. Hence, $d'=e(G')/|A''|$ satisfies $d'\geq e(G')/|A'|\geq \frac{2^{\gamma-(r-1)t}}{10(t-s)r}\geq \frac{2^{rs-(r-1)t}}{10(t-s)r}$ and $d_{G'}(u)\leq 4r2^{\gamma-(r-1)t}\leq 40(t-s)r^2d'$ for all $u\in A''$, completing the proof.
\end{proof}

The next lemma is obtained by iterative applications of Lemma \ref{lem:random regularization}.

\begin{lemma} \label{lem:iterated key lemma}
    Let $r,s,t$ be positive integers such that $r$ is sufficiently large and $s<t$. Let $G$ be a bipartite graph with parts $A$ and $B$ such that $d_G(v)=r$ for every $v\in B$, $d_G(u)\leq 2^t$ for every $u\in A$, $d_G(u,u')\leq 2^{2rs-(2r-1)t}$ for any two distinct $u,u'\in A$ and $e(G)\geq 2^s|A|$.
    
    Then there exist positive integers $s^*\geq 2rs-(2r-1)t$, $t^*>s^*$ and sets $A^*\subset A$, $B^*\subset B$ such that $N_G(v)\subset A^*$ for every $v\in B^*$, $t^*-s^*\leq 5\log r$ and, writing $G^*=G[A^*\cup B^*]$ and $d^*=e(G^*)/|A^*|$, we have $d^*\geq 2^{s^*}$ and $d_{G^*}(u)\leq 2^{t^*}$ for all $u\in A^*$.
\end{lemma}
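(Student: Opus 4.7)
The plan is to iterate Lemma~\ref{lem:random regularization}. Initialize $(A_0,B_0,G_0,s_0,t_0):=(A,B,G,s,t)$. At each step, having an induced subgraph $G_i=G[A_i\cup B_i]$ with positive integers $(s_i,t_i)$ satisfying $d_{G_i}(v)=r$ for $v\in B_i$, $d_{G_i}(u)\le 2^{t_i}$ for $u\in A_i$, and $e(G_i)\ge 2^{s_i}|A_i|$, I stop if $c_i:=t_i-s_i\le 5\log r$; otherwise I apply Lemma~\ref{lem:random regularization} to $G_i$ with parameters $s_i,t_i$, producing $A_{i+1}\subset A_i$, $B_{i+1}\subset B_i$ with $N_G(v)\subset A_{i+1}$ for $v\in B_{i+1}$. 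Writing $d_{i+1}:=e(G_{i+1})/|A_{i+1}|$, set $s_{i+1}:=\lfloor\log d_{i+1}\rfloor$ and $t_{i+1}:=\lceil\log(40c_ir^2d_{i+1})\rceil$. The conclusions of Lemma~\ref{lem:random regularization} then give the recursions
\[
s_{i+1}\ \ge\ rs_i-(r-1)t_i-\log(10c_ir)-1,\qquad c_{i+1}\ \le\ \log(40c_ir^2)+2.
\]
Each $v\in B_{i+1}$ still has all $r$ of its $G$-neighbours inside $A_{i+1}$, so the inductive invariants pass to step $i+1$.

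The critical bookkeeping is verifying the codegree hypothesis of Lemma~\ref{lem:random regularization}, namely $d_{G_i}(u,u')\le 2^{rs_i-(r-1)t_i}$, at every step. Since codegrees are monotone under subgraph inclusion, the stronger initial bound $d_G(u,u')\le 2^{2rs-(2r-1)t}$ (the hypothesis of the iterated lemma) passes to all $G_i$, so it suffices to show that $a_i:=rs_i-(r-1)t_i$ remains above $\tau:=2rs-(2r-1)t$ throughout. A direct computation from the recursions gives
\[
a_i-a_{i+1}\ \le\ (r-1)c_{i+1}+\log(10c_ir)+1,
\]
and the total available budget is $a_0-\tau=r(t-s)=rc_0$.

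Termination is very fast. Whenever $c_i\ge 40r^2$, the $c$-recursion simplifies to $c_{i+1}\le 2\log c_i$, an iteratedly logarithmic recurrence that drives $c_i$ below $40r^2$ in $O(\log^*c_0)$ steps; one further application then yields $c_{i+1}\le\log(1600r^4)+2<5\log r$ for $r$ sufficiently large, and the iteration halts. Moreover, the sum $\sum_i c_{i+1}$ is dominated by its first term $c_1\le\log c_0+2\log r+O(1)$, with the iteratedly logarithmic tail contributing only $O(\log^*(c_0)\cdot\log r)$. Hence the total drop in $a_i$ is at most $(r-1)c_1+O(\log^*(c_0)\log(c_0r))=O(r\log c_0+r\log r)$; using $c_0\ge 5\log r$ to bound $\log r\le c_0/5$ and $\log c_0\le c_0$, this is comfortably less than $rc_0$ once $r$ is sufficiently large, so the invariant $a_i\ge\tau$ is preserved. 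Setting $s^*:=s_N$, $t^*:=t_N$ at the stopping time $N$, the desired bound $s^*\ge 2rs-(2r-1)t$ follows from $s_N=a_N+(r-1)c_N\ge a_N\ge\tau$.

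The main obstacle is exactly the bookkeeping in the second paragraph: each invocation of Lemma~\ref{lem:random regularization} trades a reduction of $c_i$ for a loss in $a_i$, and the cumulative loss must fit inside the budget $r(t-s)$. The strengthened codegree hypothesis in the iterated lemma---with exponent $2rs-(2r-1)t$ rather than $rs-(r-1)t$---is precisely calibrated to supply this budget.
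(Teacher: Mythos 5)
Your proof takes the same iteration as the paper's (apply Lemma~\ref{lem:random regularization}, set $s'=\lfloor\log d'\rfloor$ and $t'=\lceil\log(40(t-s)r^2d')\rceil$, repeat), but analyzes the invariant in a genuinely different way. The paper runs an induction on $t-s$ whose entire content is a \emph{single-step monotonicity inequality}: the quantity $2rs-(2r-1)t$ (exactly the exponent in the hypothesis of Lemma~\ref{lem:iterated key lemma}) does not decrease when $(s,t)$ is replaced by $(s',t')$. This makes the codegree hypothesis propagate for free, gives $d'\ge 2$ (hence $s'\ge 1$) immediately because $2rs'-(2r-1)t'\ge 0$ forces $rs'-(r-1)t'\ge r(t'-s')$, and requires no accounting across steps. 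You instead track $a_i:=rs_i-(r-1)t_i$ (a larger quantity: $a_i=b_i+r c_i$ where $b_i:=2rs_i-(2r-1)t_i$) and argue by a cumulative budget bound that $a_0-a_N<r(t-s)$. This does work: note in particular that your invariant $a_i\ge\tau\ge 0$ together with $c_i\ge 1$ gives $s_i=a_i+(r-1)c_i\ge r-1\ge 1$, so the parameters stay positive integers, a point you should state explicitly since it is needed to invoke Lemma~\ref{lem:random regularization} at each step. But the budget route is considerably more laborious: you must control both the number of iterations $N$ and the sum $\sum_ic_i$, and your bookkeeping there is slightly off. The tail $\sum_{i\ge 2}c_i$ is not $O(\log^*(c_0)\log r)$ as you assert; since $c_2\le\log c_1+2\log r+O(1)\approx\log\log c_0+2\log r$, the tail contributes $O(\log\log c_0+\log^*(c_0)\log r)$. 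This does not affect your final $O(r\log c_0+r\log r)$ bound (it is still dominated by $(r-1)c_1$), but the intermediate claim is imprecise, and the final comparison to $rc_0$ really does require the reader to trust that the implicit constants work out, whereas the paper's one-line check $r(t-s)\ge 2r\log(160(t-s)r^2)$ is self-contained. In short: same iteration, different and valid but more cumbersome invariant analysis; the paper's choice of the ``doubled'' quantity $2rs-(2r-1)t$ as both the codegree exponent and the monotone potential is what collapses the bookkeeping that you are doing by hand.
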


\begin{proof}
    We prove the lemma by induction on $t-s$ (with $r$ fixed). If $t-s\leq 5\log r$, then we can take $s^*=s$, $t^*=t$, $A^*=A$ and $B^*=B$. Assume now that $t-s>5\log r$. Note that $2rs-(2r-1)t\leq rs-(r-1)t$, so for any two distinct $u,u'\in A$, we have $d_G(u,u')\leq 2^{rs-(r-1)t}$. By Lemma~\ref{lem:random regularization}, there are subsets $A'\subset A$ and $B'\subset B$ such that $N_G(v)\subset A'$ for every $v\in B'$ and, writing $G'=G[A'\cup B']$ and $d'=e(G')/|A'|$, we have $d'\geq \frac{2^{rs-(r-1)t}}{10(t-s)r}$ and $d_{G'}(u)\leq 40(t-s)r^2 d'$ for all $u\in A'$. By the codegree condition, $2rs-(2r-1)t\geq 0$, so $rs-(r-1)t\geq (t-s)r$ and hence $d'\geq 2$. Let $s'=\lfloor \log d' \rfloor$ and let $t'=\lceil \log(40(t-s)r^2d') \rceil$. Then $s'<t'$, $d_{G'}(v)=r$ for every $v\in B'$, $d_{G'}(u)\leq 2^{t'}$ for every $u\in A'$ and $e(G')\geq 2^{s'}|A'|$.
    Moreover, using $t'\geq rs-(r-1)t$,
    \begin{align*}
        2rs'-(2r-1)t'
        &=t'-2r(t'-s')\geq t'-2r(\log(40(t-s)r^2)+2) \\
        &\geq rs-(r-1)t-2r\log(160(t-s)r^2) \\
        &=2rs-(2r-1)t+r(t-s)-2r\log(160(t-s)r^2) \\
        &\geq 2rs-(2r-1)t,
    \end{align*}
    where the last inequality uses that $t-s\geq 5\log r$ and that $r$ is sufficiently large.
    Hence, for any distinct $u,u'\in A'$, we have $d_{G'}(u,u')\leq d_G(u,u')\leq 2^{2rs-(2r-1)t}\leq 2^{2rs'-(2r-1)t'}$. Finally, $t'-s'\leq \log(40(t-s)r^2)+2<t-s$.
    
    Thus, by the induction hypothesis, there exist positive integers $s^*\geq 2rs'-(2r-1)t'$, $t^*>s^*$ and sets $A^*\subset A'$, $B^*\subset B'$ such that $N_{G'}(v)\subset A^*$ for every $v\in B^*$, $t^*-s^*\leq 5\log r$ and, writing $G^*=G'[A^*\cup B^*]=G[A^*\cup B^*]$ and $d^*=e(G^*)/|A^*|$, we have $d^*\geq 2^{s^*}$ and $d_{G^*}(u)\leq 2^{t^*}$ for all $u\in A^*$. Since $N_G(v)=N_{G'}(v)$ for every $v\in B'$, we have $N_{G}(v)\subset A^*$ for every $v\in B^*$. As $2rs'-(2r-1)t'\geq 2rs-(2r-1)t$, it follows that $s^*\geq 2rs-(2r-1)t$, so $s^*$, $t^*$, $A^*$ and $B^*$ are suitable.
\end{proof}

\section{Completing the proof} \label{sec:complete proof}

The next lemma is the upshot of what we have proved so far.

\begin{lemma} \label{lem:small codeg implies k-regular}
    Let $r,s,t$ be positive integers such that $r$ is sufficiently large and $s<t$. Let $G$ be a bipartite graph with parts $A$ and $B$ such that $d_G(v)=r$ for every $v\in B$, $d_G(u)\leq 2^t$ for every $u\in A$, $d_G(u,u')\leq 2^{2rs-(2r-1)t-r}$ for any two distinct $u,u'\in A$ and $e(G)\geq 2^s|A|$.
    
    Then $G$ contains a $64$-almost-regular subgraph with average degree at least $\frac{r}{80\log r}$.
\end{lemma}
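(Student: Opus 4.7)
The plan is a two-step reduction: first apply Lemma~\ref{lem:iterated key lemma} to pass to an induced subgraph with strong biregularity properties, and then apply Lemma~\ref{lem:almost bireg to almost reg} to extract the desired $64$-almost-regular subgraph. The hypotheses of Lemma~\ref{lem:iterated key lemma} follow immediately from those of Lemma~\ref{lem:small codeg implies k-regular}, since the codegree bound $d_G(u,u')\leq 2^{2rs-(2r-1)t-r}$ is strictly stronger than the bound $d_G(u,u')\leq 2^{2rs-(2r-1)t}$ required there, while the remaining hypotheses coincide verbatim. Applying Lemma~\ref{lem:iterated key lemma} would therefore produce positive integers $s^*\geq 2rs-(2r-1)t$ and $t^*>s^*$ with $t^*-s^*\leq 5\log r$, together with sets $A^*\subseteq A$ and $B^*\subseteq B$ such that in $G^*:=G[A^*\cup B^*]$ every $v\in B^*$ has degree $r$, every $u\in A^*$ has degree at most $2^{t^*}$, and $d^*:=e(G^*)/|A^*|\geq 2^{s^*}$.

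The crucial additional observation is that the $-r$ slack in the codegree hypothesis forces $s^*\geq r$. Indeed, we may assume $|A|\geq 1$, and suppose for contradiction that $2rs-(2r-1)t-r<0$; then the hypothesis gives $d_G(u,u')<1$ and hence $d_G(u,u')=0$ for all distinct $u,u'\in A$. Since every $v\in B$ has $r\geq 2$ neighbours in $A$, any such $v$ would be a common neighbour of some pair in $A$, forcing $B=\emptyset$ and $e(G)=0$; this contradicts $e(G)\geq 2^s|A|\geq 2$. Thus $s^*\geq r$, which in turn yields $d^*\geq 2^r\geq r$ and $d_{G^*}(u)\leq 2^{t^*}=2^{t^*-s^*}\cdot 2^{s^*}\leq r^5 d^*$ for every $u\in A^*$. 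Consequently $G^*$ is $(r^5,r)$-almost-biregular in the sense of Definition~\ref{def:almost-biregular}.

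Applying Lemma~\ref{lem:almost bireg to almost reg} with $L=r^5$ and $\delta=r$ (the requirement $L\geq\delta\geq 2$ holds since $r$ is sufficiently large) then produces a $64$-almost-regular subgraph of $G^*$, and hence of $G$, with average degree at least $\frac{r}{16\log(r^5)}=\frac{r}{80\log r}$, which is exactly the desired conclusion. The main obstacle in this plan is the step establishing $s^*\geq r$: without it, Lemma~\ref{lem:iterated key lemma} alone only guarantees $s^*\geq 2rs-(2r-1)t$, which a priori could be far smaller than $\log r$, so that $G^*$ might fail to qualify as almost-biregular in the required sense. The $-r$ slack in the codegree hypothesis of Lemma~\ref{lem:small codeg implies k-regular}, combined with the elementary observation that a bipartite graph with all pairwise codegrees zero on one side and minimum degree at least $2$ on the other must be edgeless, is precisely what unlocks this step.
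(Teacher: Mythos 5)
Your proof is correct and follows essentially the same route as the paper: apply Lemma~\ref{lem:iterated key lemma} to obtain a $(2^{5\log r},r)$-almost-biregular subgraph $G^*$, then apply Lemma~\ref{lem:almost bireg to almost reg} with $L=2^{5\log r}=r^5$ and $\delta=r$. You also correctly spell out the justification (left implicit in the paper's one-line assertion) that the $-r$ slack in the codegree hypothesis forces $2rs-(2r-1)t-r\geq 0$, and hence $s^*\geq r$, via the observation that otherwise all codegrees in $A$ would vanish, forcing $B=\emptyset$ and contradicting $e(G)\geq 2^s|A|$.
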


\begin{proof}
    By the assumption on the codegrees, we have $2rs-(2r-1)t-r\geq 0$, so $2rs-(2r-1)t\geq r$. Hence, by Lemma~\ref{lem:iterated key lemma}, $G$ has a $(2^{5\log r},r)$-almost-biregular subgraph $G^*$ (the condition $2rs-(2r-1)t\geq r$ ensures that $s^*\geq r$ and so $d^*\geq r$). Then, by Lemma \ref{lem:almost bireg to almost reg} with $L=2^{5\log r}$ and $\delta=r$, $G^*$ has a $64$-almost-regular subgraph with average degree at least $\frac{r}{80\log r}$.
\end{proof}

The next result is very similar to Lemma \ref{lem:small codeg implies k-regular} -- the main difference is that the codegree condition is no longer present. This can be achieved using Lemma~\ref{lem:clean large codegrees}.

\begin{lemma} \label{lem:key lemma cor}
    Let $k,r,s,t$ be positive integers such that $r$ is sufficiently large, $s<t$ and $s\geq t(1-\frac{1}{6r})$. Let $G$ be a $K_{k,k}$-free bipartite graph with parts $A$ and $B$ such that $d_G(v)=r$ for every $v\in B$, $d_G(u)\leq 2^t$ for every $u\in A$ and $e(G)\geq 4(k+1)^22^s|A|$.
    
    Then $G$ contains a $64$-almost-regular subgraph with average degree at least $\frac{r}{160(k+1)\log r}$.
\end{lemma}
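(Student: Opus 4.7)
The plan is to combine the codegree-cleaning Lemma~\ref{lem:clean large codegrees} with Lemma~\ref{lem:small codeg implies k-regular}. The only real difficulty is that cleaning codegrees destroys exact $r$-regularity on the $B$-side, which Lemma~\ref{lem:small codeg implies k-regular} requires; an intermediate trimming step will recover it, and the $4(k+1)^2$ slack built into the hypothesis on $e(G)$ is precisely what pays for this trimming.

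First I would apply Lemma~\ref{lem:clean large codegrees} with $m = 2^t$ to obtain a spanning subgraph $G_1 \subseteq G$ with $e(G_1) \geq e(G)/(k+1)$ and $d_{G_1}(u,u') \leq k \cdot 2^{t(1-1/k)}$ for all distinct $u,u' \in A$. Next, to restore exact regularity on the $B$-side, I set $r' := \lceil r/(2(k+1)) \rceil$, let $B_0 := \{v \in B : d_{G_1}(v) \geq r'\}$, and form $G_2$ by keeping, for each $v \in B_0$, exactly $r'$ of its incident edges. Since $d_{G_1}(v) \leq r$ for all $v$, the vertices outside $B_0$ contribute at most $(r'-1)|B| \leq e(G)/(2(k+1))$ edges to $G_1$; subtracting this from $e(G_1) \geq e(G)/(k+1)$ gives $|B_0| \geq |B|/(2(k+1))$ and hence $e(G_2) = r'|B_0| \geq e(G)/(4(k+1)^2) \geq 2^s|A|$. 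Since codegrees and $A$-degrees can only decrease, $G_2$ meets every hypothesis of Lemma~\ref{lem:small codeg implies k-regular} with parameters $(r', s, t)$ provided the codegree inequality $k \cdot 2^{t(1-1/k)} \leq 2^{2r's - (2r'-1)t - r'}$ holds.

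To verify this codegree inequality, I would first observe that $e(G) \leq 2^t |A|$ together with $e(G) \geq 4(k+1)^2 2^s|A|$ forces $t \geq s + 2 + 2\log(k+1)$, which combined with the hypothesis $s \geq t(1 - 1/(6r))$ yields $t \geq 12r(1 + \log(k+1))$. Rewriting the required inequality as $t/k \geq 2r'(t-s) + r' + \log_2 k$ and using $t - s \leq t/(6r)$ along with $r' \leq r/(2(k+1)) + 1$, both summands on the right are easily seen to be much smaller than $t/k$ once $r$ is sufficiently large in terms of $k$; this is routine but slightly fiddly. Applying Lemma~\ref{lem:small codeg implies k-regular} to $G_2$ then produces a $64$-almost-regular subgraph of average degree at least $r'/(80 \log r') \geq r/(160(k+1)\log r)$, completing the proof. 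The main obstacle throughout is the recovery of exact $B$-regularity after codegree cleaning, and it is exactly this recovery that costs the extra $(k+1)$ factor separating this lemma's conclusion from that of Lemma~\ref{lem:small codeg implies k-regular}.
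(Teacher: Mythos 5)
Your proposal is correct and mirrors the paper's own proof: clean codegrees via Lemma~\ref{lem:clean large codegrees}, restrict to the $B$-vertices that retain high degree and trim down to an exactly $r'$-regular $B$-side with $r' = \lceil r/(2(k+1))\rceil$ (this is exactly what the $4(k+1)^2$ slack pays for), verify the codegree hypothesis, and invoke Lemma~\ref{lem:small codeg implies k-regular}. The only detail the paper makes explicit that you leave slightly implicit is the initial reduction ``we may assume $r\geq k\log r$, else the conclusion is trivial,'' which is what licenses your repeated use of ``$r$ sufficiently large in terms of $k$'' both in the codegree verification and in applying Lemma~\ref{lem:small codeg implies k-regular} to $G_2$ with parameter $r'$.
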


\begin{proof}
    We may assume that $r\geq k\log r$, otherwise the conclusion of the lemma is trivial. By Lemma~\ref{lem:clean large codegrees}, $G$ has a spanning subgraph $G'$ such that $e(G')\geq e(G)/(k+1)$ and $d_{G'}(u,u')\leq k 2^{(1-1/k)t}$ for any two distinct $u,u'\in A$. Since $s\leq t-1$ and $s\geq t(1-\frac{1}{6r})$, we have $t\geq 6r$. Hence, $2^{\frac{t}{2k}}\geq 2^{3r/k}\geq 2^{\log r}=r\geq k$. Thus, $k 2^{(1-1/k)t}\leq 2^{(1-\frac{1}{2k})t}$, so
    $d_{G'}(u,u')\leq 2^{(1-\frac{1}{2k})t}$ for any two distinct $u,u'\in A$. Let $\tilde{B}$ be the subset of $B$ consisting of those vertices $v$ which satisfy $d_{G'}(v)\geq r/(2k+2)$. Now the number of edges in $G'$ between $A$ and $B\setminus \tilde{B}$ is at most $|B|r/(2k+2)=e(G)/(2k+2)\leq e(G')/2$, so there are at least $e(G')/2$ edges between $A$ and $\tilde{B}$. Let $r'=\lceil r/(2k+2)\rceil$ and let $G''$ be a spanning subgraph of $G'[A\cup \tilde{B}]$ obtained by keeping precisely $r'$ edges from each $v\in \tilde{B}$. Clearly, $e(G'')\geq e(G'[A\cup \tilde{B}])/(2k+2)\geq e(G')/(4k+4)\geq e(G)/(4(k+1)^2)\geq 2^s|A|$.
    Note that
    \begin{align*}
        2r's-(2r'-1)t-r'
        &=t-r'(2t-2s+1)\geq t-\frac{r}{k}(2t-2s+1)\geq t-\frac{3r}{k}(t-s) \\
        &\geq t-\frac{3r}{k}\cdot \frac{t}{6r}=(1-\frac{1}{2k})t,
    \end{align*}
    so $d_{G''}(u,u')\leq 2^{2r's-(2r'-1)t-r'}$ holds for any distinct $u,u'\in A$.
    Thus, we can apply Lemma~\ref{lem:small codeg implies k-regular} with $G''$ in place of $G$ and $r'$ in place of $r$ to get a $64$-almost-regular subgraph with average degree at least $\frac{r'}{80\log r'}\geq \frac{r}{160(k+1)\log r}$.
\end{proof}

We are now in a position to prove our main result. In the proof and later in the paper, we omit floor and ceiling signs whenever they are not crucial.

\begin{theorem} \label{thm:general main result}
    Let $k$, $r$ and $\Delta$ be positive integers such that $r$ is sufficiently large. Let $G$ be a $K_{k,k}$-free graph with maximum degree at most $\Delta$ and average degree at least $80r^2\log \log \Delta$. Then $G$ has a $64$-almost-regular subgraph with average degree at least $\frac{r}{160(k+1)\log r}$.
\end{theorem}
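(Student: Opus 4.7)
The plan is to reduce Theorem~\ref{thm:general main result} to Lemma~\ref{lem:key lemma cor} by extracting from $G$ a bipartite subgraph that satisfies the latter's hypotheses. First I would pass to a bipartite subgraph $G_0 \subseteq G$ with $e(G_0) \geq e(G)/2$, so that $G_0$ still has average degree at least $40 r^2 \log \log \Delta$; write $n = |V(G)|$ and let $A, B$ be the parts of $G_0$.

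Next I would partition $A$ into geometric degree shells $A^{(j)} = \{u \in A : 2^{b_j} < d_{G_0}(u) \leq 2^{b_{j+1}}\}$, with baseline $b_0 = \Theta(r \log(k+1))$ and multiplicative width $b_{j+1} \leq b_j(1 + 1/(7r))$ for $b_j \geq b_0$, discarding the vertices with $d_{G_0}(u) \leq 2^{b_0}$. The discarded vertices carry at most $2^{b_0} n = (k+1)^{O(r)} n$ edges, which is negligible compared to $e(G_0)$ provided $\log \log \Delta$ is not too small — the complementary small-$\Delta$ regime can be handled by a direct application of Lemma~\ref{lem:almost bireg to almost reg}, since there the graph already has small max-to-average degree ratio. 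The number of shells is $O(r \log \log \Delta)$, so pigeonhole yields a shell $A^* = A^{(j^*)}$ incident to at least $e^* = \Omega(rn)$ edges in $G_0$. I would set $t := b_{j^*+1}$, so that $d_{G_0}(u) \leq 2^t$ for every $u \in A^*$.

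To regularize the $B$-side, for each $v \in B$ with $|N_{G_0}(v) \cap A^*| \geq r$ I would partition its $A^*$-neighborhood into disjoint groups of exactly $r$ and create one copy of $v$ per group; let $\tilde B$ be the union of these copies and $H$ the resulting bipartite graph on $A^* \cup \tilde B$. Then $\tilde B$ is $r$-regular in $H$, $d_H(u) \leq 2^t$ for every $u \in A^*$, and $H$ remains $K_{k,k}$-free (distinct copies of the same $v$ have disjoint $A^*$-neighborhoods, so any $K_{k,k}$ in $H$ projects to one in $G$). The edge count is $e(H) = r|\tilde B| \geq e^* - r|B| = \Omega(rn)$. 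Setting $s := \lfloor \log(e(H)/(4(k+1)^2 |A^*|)) \rfloor$ makes the edge condition $e(H) \geq 4(k+1)^2 2^s |A^*|$ hold by construction. Using $|A^*| < e^*/2^{b_{j^*}}$ from the shell's minimum degree and $e(H) = \Theta(e^*)$, this gives $s \geq b_{j^*} - O(\log(k+1))$; combined with the shell-width condition, one gets $s \geq b_{j^*+1}(1 - 1/(6r)) = t(1 - 1/(6r))$. Lemma~\ref{lem:key lemma cor} applied to $H$ then delivers the desired 64-almost-regular subgraph of average degree at least $r/(160(k+1) \log r)$.

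The hardest part is the parameter juggling needed to satisfy all of Lemma~\ref{lem:key lemma cor}'s hypotheses simultaneously after the two-step reduction (degree shelling, then $B$-regularization). In particular, the $(k+1)^2$ factor in the edge condition forces a baseline degree threshold of order $r \log(k+1)$, and one must verify that the edges below this threshold are dominated by $e(G_0)$; this is precisely where the $80 r^2 \log \log \Delta$ hypothesis on $G$'s average degree is consumed.
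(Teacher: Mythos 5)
Your proposal tracks the paper's Case~2 quite closely (bipartite reduction, multiplicative degree shells of width $\approx 1+1/(7r)$, pigeonhole, reduce $B$ to degree~$r$, invoke Lemma~\ref{lem:key lemma cor}), but it omits what the paper handles as Case~1 and introduces a vertex-splitting step; both are genuine gaps.

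The discarding step does not close. You drop $A$-vertices of degree at most $2^{b_0}$ with $b_0=\Theta(r\log(k+1))$, and the edges lost are up to $2^{b_0}|A|\approx 2^{\Theta(r)}n$; for this to be dominated by $e(G_0)\approx r^2(\log\log\Delta)\,n$ you need $\log\log\Delta \gtrsim 2^{\Theta(r)}/r^2$, i.e.\ $\Delta$ triple-exponential in $r$. You claim the complementary regime is easy because ``the graph already has small max-to-average degree ratio'', but that is false: the hypothesis bounds the average degree below by $\Theta(r^2\log\log\Delta)$, not $\Delta$ above, and $\Delta$ can be as large as $2^{2^{\mathrm{poly}(r)}}$ in that regime. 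Applying Lemma~\ref{lem:almost bireg to almost reg} directly then gives average degree of order $r^2\log\log\Delta/\log\Delta$, which is far below the target $r/(160(k+1)\log r)$. The paper's fix is to not discard: it takes the much larger threshold $t_0=r(\log r)(\log\log\Delta)^{1/2}$ and argues that for each $v\in B$ either at least half of $d_H(v)$ lands in $A_0$ (Case~1: Lemma~\ref{lem:almost bireg to almost reg} applies with $L=2^{t_0}$, $\delta=r^2\log\log\Delta$, and the calibration of $t_0$ makes $\delta/(16\log L)\geq r/(16\log r)$) or a $1/(2\ell)$ fraction lands in some $A_i$, $i\geq 1$ (Case~2: Lemma~\ref{lem:key lemma cor}). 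That per-vertex dichotomy, rather than a global discard, is the essential structural difference.

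The vertex-splitting step is also a problem. After splitting each $v$ into copies of degree $r$, Lemma~\ref{lem:key lemma cor} returns a $64$-almost-regular subgraph of $H$, not of $G$. Merging the copies of $v$ back into a single vertex sums their degrees, so the almost-regularity constant blows up by the maximum number of surviving copies of any $v$, which can be as large as $d_G(v)/r$ and which you do not control. The paper sidesteps this entirely by passing to a genuine subgraph of the original graph in which each $v$ retains exactly $r$ edges into the chosen shell (losing only a bounded factor in the edge count), so no identification is ever needed.
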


\begin{proof}
    Since $r$ is sufficiently large, the degree conditions imply that $\Delta$ is also sufficiently large. Moreover, we may assume that $r\geq k$, otherwise the conclusion of the lemma is trivial.
    Note that $G$ has a bipartite subgraph $G'$ with average degree at least $40r^2\log \log \Delta$ and $G'$ has a non-empty subgraph $G''$ with minimum degree at least $20r^2\log \log \Delta$. Let $A$ and $B$ be the parts of $G''$ such that $|A|\leq |B|$. Let $H$ be a spanning subgraph of $G''$ such that $d_H(v)=20r^2\log \log \Delta$ for every $v\in B$.
    
    Let $t_0=r (\log r)(\log \log \Delta)^{1/2}$ and let $\ell$ be the smallest non-negative integer such that $t_0/(1-\frac{1}{10r})^{\ell}\geq \log \Delta$. Note that $(1-\frac{1}{10r})^{10r\log \log \Delta}\leq \exp(-\log \log \Delta)\leq 1/\log \Delta$, so $\ell\leq 10r\log \log \Delta$. For $1\leq i\leq \ell$, let $t_i=t_0/(1-\frac{1}{10r})^{i}$. Clearly, $t_\ell\geq \log \Delta$.
	
	Let $A_0=\{u\in A: d_H(u)\leq 2^{t_0}\}$ and for $1\leq i\leq \ell$, let $A_i=\{u\in A: 2^{t_{i-1}}<d_H(u)\leq 2^{t_i}\}$. Clearly, these sets partition $A$. Hence, for every $v\in B$, either $v$ has at least $d_H(v)/2= 10r^2\log \log \Delta$ neighbours (in the graph $H$) in $A_0$ or $\ell>0$ and there is some $1\leq i\leq \ell$ such that $v$ has at least $d_H(v)/(2\ell)\geq r$ neighbours in $A_i$.
	
	Therefore, at least one of the following two cases must occur.
	
	\noindent \textbf{Case 1.} There are at least $|B|/2$ vertices $v\in B$ which have at least $10r^2\log \log \Delta$ neighbours in $A_0$.
	
	\noindent \textbf{Case 2.} There exist some $1\leq i\leq \ell$ and at least $|B|/(2\ell)$ vertices $v\in B$ which have at least $r$ neighbours in $A_i$.
	
	\sloppy In Case 1, let $B'\subset B$ be a set of size at least $|B|/2$ such that every $v\in B'$ has at least $10r^2\log \log \Delta$ neighbours in $A_0$. For technical reasons, let us take a random subset $A_0'\subset A_0$ of size $|A_0|/3$. With positive probability, there is a set $B''\subset B'$ of at least $2|B'|/3$ vertices which all have at least $r^2\log \log \Delta$ neighbours in $A_0'$. Let $H'$ be a spanning subgraph of $H[A_0'\cup B'']$ obtained by keeping precisely $r^2\log \log \Delta$ edges from each vertex in $B''$. Now note that $|A_0'|=|A_0|/3\leq |A|/3\leq |B|/3\leq 2|B'|/3\leq |B''|$. Moreover, $d_{H'}(u)\leq d_H(u)\leq 2^{t_0}$ for every $u\in A_0'$. This implies that $H'$ is $(L,d)$-almost biregular for $L=2^{t_0}=2^{r(\log r)(\log \log \Delta)^{1/2}}$ and $d=r^2\log \log \Delta$. Hence, by Lemma \ref{lem:almost bireg to almost reg}, $H'$ has a $64$-almost-regular subgraph with average degree at least $\frac{r^2\log \log \Delta}{16r(\log r)(\log \log \Delta)^{1/2}}\geq \frac{r}{160(k+1)\log r}$.
	
	In Case 2, let us choose some $1\leq i\leq \ell$ and a set $B'\subset B$ of size at least $|B|/(2\ell)$ such that for every $v\in B'$, $v$ has at least $r$ neighbours in $A_i$. Let $H'$ be a subgraph of $H[A_i\cup B']$ obtained by keeping precisely $r$ edges incident to each $v\in B'$. We will apply Lemma \ref{lem:key lemma cor} for this graph. Let $t=t_i$ and let $s=t(1-\frac{1}{6r})$. Note that for any $u\in A_i$, $d_{H'}(u)\leq d_{H}(u)\leq 2^{t}$. Let $C=4(r+1)^2$. Using that $|B|\cdot 20r^2\log \log \Delta= e(H)\geq |A_i|2^{t_{i-1}}$, we get
	\begin{align*}
	    e(H')
	    &=|B'|r\geq |B|r/(2\ell)=\frac{e(H)}{40\ell r \log \log \Delta}\geq \frac{e(H)}{400r^2(\log \log \Delta)^2}\geq \frac{C|A_i|2^{t_{i-1}}}{400Cr^2(\log \log \Delta)^2} \\
	    &=C|A_i|2^{t_{i-1}-\log(400Cr^2(\log \log \Delta)^2)}.
	\end{align*}
	Note that $t_{i-1}-\log(400Cr^2(\log \log \Delta)^2)=t(1-\frac{1}{10r})-\log(400Cr^2(\log \log \Delta)^2)\geq t(1-\frac{1}{6r})=s$, where the inequality follows from $t/r\geq t_0/r=(\log r)(\log \log \Delta)^{1/2}$ and since $\Delta$ is sufficiently large.
	Hence, $e(H')\geq C|A_i|2^s\geq 4(k+1)^2|A_i|2^s$.
	
	Thus, we can apply Lemma \ref{lem:key lemma cor} to the graph $H'$ and get a $64$-almost-regular subgraph with average degree at least $\frac{r}{160(k+1)\log r}$.
\end{proof}

Theorem \ref{thm:logloglog error term} now follows easily.

\begin{proof}[Proof of Theorem \ref{thm:logloglog error term}]
    Let $r$ be sufficiently large in terms of $k$ and let $C=80r^2$. Let $G$ be a graph with maximum degree $\Delta$ and average degree at least $C\log \log \Delta$. If $G$ contains $K_{k,k}$ as a subgraph, then it has a $k$-regular subgraph. Else, by Theorem \ref{thm:general main result}, $G$ has a $64$-almost-regular subgraph $G'$ with average degree at least $\frac{r}{160(k+1)\log r}$. Since $r$ is sufficiently large in terms of $k$, Theorem \ref{thm:PRSz} implies that $G'$ has a $k$-regular subgraph.
\end{proof}

\section{Concluding remarks} \label{sec:concluding remarks}
Motivated by the study of the Tur\'an number of the cube, in 1970, Erd\H os and Simonovits~\cite{ES70} proved the following result.

\begin{theorem}[Erd\H os--Simonovits \cite{ES70}]
    For every $\alpha>0$ there exist some $K=K(\alpha)$ and $n_0=n_0(\alpha)$ such that any graph with $n\geq n_0$ vertices and at least $n^{1+\alpha}$ edges contains a $K$-almost-regular subgraph with $m$ vertices and at least $\frac{2}{5}m^{1+\alpha}$ edges for some $m\geq n^{\alpha(1-\alpha)/(1+\alpha)}$.
\end{theorem}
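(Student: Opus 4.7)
The plan is to apply a dyadic decomposition of the vertex set by degree, extracting an almost-regular subgraph via pigeonhole and, in the unbalanced case, recursion on $n$.

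First I would clean up $G$: iteratively removing any vertex whose degree is less than a fixed fraction of the current average degree discards at most half the edges, leaving a subgraph $G_1$ on $n_1 \leq n$ vertices with at least $n^{1+\alpha}/2$ edges and minimum degree at least a constant multiple of $n_1^{\alpha}$. Next, I would partition $V(G_1)$ into dyadic degree classes $V_i = \{v : 2^i \leq d_{G_1}(v) < 2^{i+1}\}$; since all degrees lie in a range of length $O(\log n)$, only $O(\log n)$ classes are nonempty. Counting edges according to the unordered pair of dyadic classes their endpoints belong to, the pigeonhole principle yields indices $i \leq j$ such that the bipartite (respectively, induced) subgraph $H$ on $V_i \cup V_j$ carries at least $e(G_1)/\log^2 n$ edges.

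Then I would perform a case analysis on the gap $j - i$. If $j - i \leq \log K$ for a suitable $K = K(\alpha)$, then $H$ already has degree ratio at most $2K$; a short calculation bounding $|V(H)|$ above by $O(e(H)/2^{i})$ verifies the density bound $e(H) \geq \tfrac{2}{5}|V(H)|^{1+\alpha}$ once the polylogarithmic losses are absorbed into $K$, giving a $K$-almost-regular subgraph of the required size. If instead $j - i > \log K$, then $V_j$ is a small, high-degree set while $V_i$ supplies abundant common neighbours for the vertices of $V_j$; I would then recurse, either passing to the subgraph induced by $V_j$ (whose internal density can be lower-bounded using these common neighbours) or restricting to a judiciously chosen subset of $V_i$, obtaining a graph on $n' < n$ vertices still satisfying $e' \geq (n')^{1+\alpha}$ to which the inductive hypothesis applies.

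The main obstacle is this unbalanced case: one must select the recursive subgraph so that (i) it inherits the density exponent $1+\alpha$ and (ii) the shrinkage $n \mapsto n'$ is controlled enough that the composition of shrinkages across all recursion levels remains bounded. Tracking the recurrence, each step reduces $n$ by roughly $n \mapsto n^{(1-\alpha)/(1+\alpha)}$, and the specific bound $m \geq n^{\alpha(1-\alpha)/(1+\alpha)}$ in the conclusion emerges as the fixed-point exponent of this contraction; the constant $\tfrac{2}{5}$ is obtained at the base case by absorbing all polylogarithmic losses into the choice of $K$ and tightening the thresholds in the pigeonhole step.
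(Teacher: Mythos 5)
This theorem is quoted in the paper from Erd\H os--Simonovits \cite{ES70} without proof, so there is no ``paper proof'' to compare against; I evaluate your proposal on its own merits. Your high-level plan (dyadic degree classes, a dichotomy between an almost-regular case and an unbalanced case, and a recursion in the unbalanced case) points in the right direction, but as written it has several genuine gaps. First, the pigeonhole over $O(\log^2 n)$ pairs of classes loses a $\log^2 n$ factor in the edge count, and this \emph{cannot} be absorbed into $K$: the parameter $K$ only governs the degree ratio, whereas the conclusion demands the explicit edge count $\frac{2}{5}m^{1+\alpha}$. To offset the polylog loss you would need to show $m$ is substantially smaller than $n$, which the pigeonhole alone does not give. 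Second, even when $j-i$ is small, $H = G_1[V_i\cup V_j]$ need not be almost-regular: a vertex of $V_i$ has degree $\approx 2^i$ in $G_1$, but its degree \emph{inside} $H$ can be far smaller, so ``degree ratio at most $2K$'' does not follow from the class widths alone. The actual argument handles this by showing the discarded edges are few, so that degrees in the subgraph stay close to degrees in the ambient graph.

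Third, the recursion is described only as a plan with no mechanism: you do not explain why $G[V_j]$ or a subset of $V_i$ retains density exponent $1+\alpha$, and this is precisely the hard part. Your stated recurrence $n\mapsto n^{(1-\alpha)/(1+\alpha)}$ with a ``fixed-point exponent'' is also not how the bound $m\ge n^{\alpha(1-\alpha)/(1+\alpha)}$ arises: iterating $n\mapsto n^{\beta}$ with $\beta<1$ drives $n$ to a constant, not to $n^{\alpha(1-\alpha)/(1+\alpha)}$. In the Erd\H os--Simonovits argument the recursion is instead forced to terminate after a bounded number of steps because the edge density in any $m$-vertex graph is at most $\binom{m}{2}$, so the density exponent cannot keep being preserved or boosted while $m$ shrinks; the exponent $\alpha(1-\alpha)/(1+\alpha)$ emerges from a careful bookkeeping of how much $m$ can shrink before this forces termination, not from a contraction fixed point. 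These are not cosmetic issues: both the $\frac{2}{5}m^{1+\alpha}$ edge bound and the $m\ge n^{\alpha(1-\alpha)/(1+\alpha)}$ vertex bound require the recursion mechanism and its termination analysis to be made precise, and your outline leaves both open.
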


\noindent
This result has since become one of the most widely used tools for Tur\'an type problems. Its extreme usefulness comes from the fact that it allows us to replace a general host graph by an almost-regular one at negligible cost.
Usually extremal problems are much easier to deal with when the host graph is almost-regular.

In their paper, Erd\H os and Simonovits asked whether a similar ``regularization'' is possible for sparser graphs. More precisely, they asked whether there exist absolute constants $\eps,K>0$ such that any $n$-vertex graph with at least $n\log n$ edges contains a $K$-almost-regular subgraph with $m$ vertices and at least $\eps m\log m$ edges, where $m\rightarrow \infty$ as $n\rightarrow \infty$. Using a variant of the construction of Pyber, R\"odl and Szemer\'edi from Theorem~\ref{thm:PRSzlower}, Alon \cite{Alon08} gave a negative answer to this question as follows.

\begin{theorem}[Alon \cite{Alon08}] \label{thm:Alon sqrt}
    For every $K>0$ and $n>10^6$, there is an $n$-vertex graph with at least $n\log n$ edges in which any $K$-almost-regular $m$-vertex subgraph has at most $72m\sqrt{\log m}+18\log(64K)+324$ edges.
\end{theorem}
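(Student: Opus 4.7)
The plan is to construct a denser variant of the random bipartite graph underlying Theorem \ref{thm:PRSzlower}: a graph with at least $n\log n$ edges (rather than $n\log\log n$) that nonetheless contains no large $K$-almost-regular subgraph. I would take the vertex set to be $A \cup B$ with $|B|=\Theta(n)$ and $A = \bigsqcup_j A(j)$, where the level sizes $|A(j)|$ decrease geometrically in $j$ and the index $j$ ranges over roughly $\sqrt{\log n}$ consecutive values. Each $v \in B$ would receive an independent random set of neighbors in each $A(j)$, with the mean number of neighbors per level calibrated so that (i) the total edge count is at least $n\log n$, and (ii) the typical degree of a vertex of $A(j)$ grows by a fixed factor between consecutive levels. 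Standard Chernoff and union bound arguments would then show that with positive probability the construction enjoys the expected concentration properties: every $u \in A(j)$ has degree within a constant factor of its expected level value, no pair of $A$-vertices has abnormally large codegree, and the degree sequence on the $B$-side is tightly concentrated around its mean.

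For the analysis, let $G' \subset G$ be $K$-almost-regular with $m$ vertices, minimum degree $\delta$, and maximum degree at most $K\delta$. Since typical degrees in consecutive $A(j)$ differ by a fixed multiplicative factor, all but a bounded number of $A$-vertices of $G'$ must lie in at most $O(\log K)$ consecutive levels; the vertices that fall outside this narrow band contribute at most the additive $O(\log(64K))$ slack in the statement. Within the chosen band, each $v \in B \cap V(G')$ has at most $O(\sqrt{\log n})$ neighbors per active level, and the $K$-almost-regularity pins down the number of active levels up to the trade-off with $\log K$. A double count across the active levels, optimized over the admissible range of levels, then yields the edge bound $O(m\sqrt{\log m}) + O(\log(64K)) + O(1)$, matching the claimed inequality once the explicit constants are tracked.

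The main obstacle will be twofold. First, the construction parameters (the lengths and sizes of the levels, and the per-level neighbor counts) must be calibrated so that the $\sqrt{\log m}$ factor in the bound emerges with the specific constant $72$ rather than something asymptotically correct but quantitatively weaker; this forces a careful optimization between edge density and the number of levels. Second, one must rule out the scenario in which $G'$ is dense because it concentrates on a few atypical vertices of unusually large degree inside some $A(j)$. The latter requires controlling upper tails of level-wise degrees uniformly, via a union bound over all possible ``heavy'' subsets of $A$, so that no small induced subgraph can simultaneously be dense, nearly regular, and escape the level-structural restrictions used in the main counting argument.
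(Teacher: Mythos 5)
The paper does not prove Theorem~\ref{thm:Alon sqrt}; it quotes it as a black box from Alon's paper \cite{Alon08}, remarking only that Alon's construction is a variant of the Pyber--R\"odl--Szemer\'edi construction behind Theorem~\ref{thm:PRSzlower}. There is thus no ``paper's own proof'' here against which to compare your sketch.

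Taken on its own terms, your proposal is a plan rather than a proof. The level sizes, the per-level neighbour distribution, the Chernoff and union-bound estimates, and the extraction of the explicit constants $72$, $18$, and $324$ are all announced rather than carried out, and you say yourself in the final paragraph that the real difficulty lies exactly there. One concrete worry about the calibration you do commit to: with roughly $\sqrt{\log n}$ levels, each $v\in B$ must have about $\sqrt{\log n}$ neighbours per level to reach $n\log n$ edges, and with degrees growing by a fixed factor per level a $K$-almost-regular subgraph confined to a band of $O(\log K)$ levels would have every $B$-vertex of degree $O\bigl(\sqrt{\log n}\,\log K\bigr)$, yielding an edge count of order $m\sqrt{\log n}\,\log K$. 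That gives $\sqrt{\log n}$ rather than $\sqrt{\log m}$, and it puts $\log K$ in multiplicatively rather than additively. Matching the stated bound $72m\sqrt{\log m}+18\log(64K)+324$ requires a mechanism that ties the admissible degree scale to the size $m$ of the subgraph itself (not just to $n$ and $K$), together with a separate accounting that pushes the $\log K$ dependence into a lower-order additive term. Your sketch supplies neither ingredient, so while the overall template is plausible, I cannot certify the argument as it stands.
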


Thus we cannot necessarily pass to an almost-regular $m$-vertex subgraph with much more than $m\sqrt{\log m}$ edges. This naturally leads to the question what density we can actually guarantee in an almost-regular subgraph of a graph with $n\log n$ edges. Our Theorem \ref{thm:general main result} essentially answers this question. Indeed, it implies that we can always find an almost-regular $m$-vertex subgraph with nearly $m\sqrt{\log m}$ edges, showing that Theorem \ref{thm:Alon sqrt} is asymptotically tight.

\begin{theorem}
    For any positive integer $m_0$, there exist some $n_0=n_0(m_0)$ and $\eps=\eps(m_0)>0$ such that any graph with $n\geq n_0$ vertices and at least $n\log n$ edges has a $64$-almost-regular subgraph with $m\geq m_0$ vertices and at least $\eps m\sqrt{\log m}/(\log \log m)^{3/2}$ edges.
\end{theorem}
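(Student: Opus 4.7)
The plan is to apply Theorem \ref{thm:general main result} with parameters chosen to match the target density, after an easy case split on whether $G$ contains a large complete bipartite graph. Fix an integer $k=k(m_0)\geq m_0$; if $G$ contains a copy of $K_{k,k}$, then this copy is a $1$-almost-regular (hence $64$-almost-regular) subgraph on $2k\geq m_0$ vertices with $k^2$ edges, which easily exceeds $\eps\cdot 2k\sqrt{\log(2k)}/(\log\log(2k))^{3/2}$ for any sufficiently small $\eps=\eps(m_0)$, so that case is handled immediately.

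Otherwise $G$ is $K_{k,k}$-free. Choose $r=\lfloor\sqrt{(\log n)/(40\log\log n)}\rfloor$. Since $\Delta(G)\leq n$, the inequality $80r^2\log\log\Delta(G)\leq 80r^2\log\log n\leq 2\log n$ holds, matching the average degree of $G$, so for $n\geq n_0(m_0)$ large enough that $r$ itself is large in terms of $k$, Theorem \ref{thm:general main result} applies and yields a $64$-almost-regular subgraph $G'$ with average degree
\[ d'\geq \frac{r}{160(k+1)\log r}. \]
Because $r=\Theta(\sqrt{\log n/\log\log n})$ and $\log r=\Theta(\log\log n)$, this simplifies to $d'\geq c(k)\sqrt{\log n}/(\log\log n)^{3/2}$ for some constant $c(k)>0$ depending only on $k$, and hence only on $m_0$.

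Finally, I translate this bound from $n$ to the vertex count $m$ of $G'$. Since $G'$ is $64$-almost-regular, $m\geq d'+1$, which exceeds $m_0$ once $n_0(m_0)$ is chosen large. The function $f(x)=\sqrt{\log x}/(\log\log x)^{3/2}$ is increasing for $x$ sufficiently large (direct differentiation in $u=\log x$), and $m\leq n$, so after enlarging $n_0$ to put $m$ above this monotonicity threshold, $f(m)\leq f(n)$ and therefore $d'\geq c(k)f(m)$. Setting $\eps(m_0):=c(k)/2$, the edge count $d'm/2$ of $G'$ is at least $\eps m\sqrt{\log m}/(\log\log m)^{3/2}$, as required.

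The only real obstacle is bookkeeping: coordinating several ``sufficiently large'' thresholds (on $r$ to invoke Theorem \ref{thm:general main result}, on $m$ for both $m\geq m_0$ and the monotonicity of $f$, and on $n$ which drives the other two). Since $k$ is a constant depending only on $m_0$, all of these are met simultaneously by taking $n_0=n_0(m_0)$ large enough, and no further ideas beyond Theorem \ref{thm:general main result} are needed.
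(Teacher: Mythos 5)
Your proof is correct and takes essentially the same approach as the paper: split on whether $K_{k,k}\subseteq G$, choose $r=\Theta(\sqrt{\log n/\log\log n})$ so the hypothesis of Theorem \ref{thm:general main result} is met, and then transfer the density bound from $n$ to $m$ using $m\leq n$ and the eventual monotonicity of $\sqrt{\log x}/(\log\log x)^{3/2}$. The only cosmetic differences are the exact constant in the choice of $r$ and that the paper applies the $K_{k,k}$ dichotomy directly with $k=m_0$.
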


\begin{proof}
    Let $n_0$ be sufficiently large in terms of $m_0$ and let $G$ be a graph with $n\geq n_0$ vertices and at least $n\log n$ edges. Let $r=\frac{\sqrt{\log n}}{10\sqrt{\log \log n}}$ and let $\Delta$ be the maximum degree of $G$. Since $\Delta\leq n$, the average degree of $G$ is at least $\log n\geq 100r^2\log \log \Delta$. If $K_{m_0,m_0}$ is a subgraph of $G$, then we are done. Else, by Theorem \ref{thm:general main result}, $G$ has a $64$-almost-regular subgraph $G'$ with average degree at least $\frac{r}{160(m_0+1)\log r}$. This is at least $2\eps \sqrt{\log n}/(\log \log n)^{3/2}$ for some $\eps>0$ that depends only on $m_0$. Now if $m$ is the number of vertices in $G'$, then $m\geq m_0$ (since $n$ is sufficiently large) and $G'$ has at least $\eps m\sqrt{\log n}/(\log \log n)^{3/2}\geq \eps m\sqrt{\log m}/(\log \log m)^{3/2}$ edges, as desired.
\end{proof}

In certain extremal problems one wants to pass to almost-regular subgraphs whose average degree is large not just compared to the number of vertices in the subgraph but also compared to the number of vertices in the original graph (which we call $n$). Using a variant of Pyber's argument, it was shown in \cite{ARS+17} and \cite{BKPSTW20} that passing to such a subgraph is possible with the loss of only a $\log n$ factor in the average degree. Also, by considering a variant of the construction of Pyber, R\"odl and Szemer\'edi, it was shown in \cite{BKPSTW20} that this $\log n$ loss is necessary for graphs with average degree at least $n^{\eps}$. However, when the graph has at most $n\log n$ edges, then this $\log n$ loss means that the above result does not say anything non-trivial. Our methods apply in this sparse regime and give the following.

\begin{theorem} \label{thm:large average degree almost-regular subgraph}
    There is an absolute constant $\eps>0$ such that any $n$-vertex graph with average degree $d\geq 2\log \log n$ has a $64$-almost-regular subgraph with average degree at least $\eps \frac{(d/\log \log n)^{1/4}}{(\log (d/\log \log n))^{1/2}}$. Furthermore, for any positive integer $t$ there is some $\eps_t>0$ such that any $K_{t,t}$-free $n$-vertex graph with average degree $d\geq 2\log \log n$ has a $64$-almost-regular subgraph with average degree at least $\eps_t\frac{(d/\log \log n)^{1/2}}{\log (d/\log \log n)}$.
\end{theorem}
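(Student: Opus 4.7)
The plan is to reduce everything to Theorem \ref{thm:general main result} via a careful choice of the parameter $r$ and, in the unrestricted case, via a case split based on the presence or absence of a $K_{k,k}$ subgraph.

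Let $G$ be an $n$-vertex graph of average degree $d\geq 2\log\log n$; write $\Delta$ for its maximum degree so that $\log\log\Delta\leq \log\log n$, and set $D=d/\log\log n\geq 2$. For the second (the $K_{t,t}$-free) statement, I would set $r$ to be the largest integer with $80r^2\log\log n\leq d$, so $r\asymp\sqrt{D}$. With this choice the average-degree hypothesis of Theorem \ref{thm:general main result} is satisfied (using $\log\log\Delta\leq\log\log n$), and applying that theorem with $k=t$ produces a $64$-almost-regular subgraph of average degree at least $\frac{r}{160(t+1)\log r}\gtrsim_t \frac{\sqrt{D}}{\log D}$, matching the target. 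The one caveat is that Theorem \ref{thm:general main result} requires $r$ to be sufficiently large; when $r$ is below that threshold, $D$ is bounded by an absolute constant, hence so is the target quantity, and any single edge provides a subgraph with average degree absorbing into $\eps_t$.

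For the first statement, where no $K_{t,t}$-freeness is assumed, I would again set $r\asymp\sqrt{D}$ and introduce a second parameter $k:=\lceil c\sqrt{r/\log r}\rceil$ for a small absolute constant $c>0$. Now I split into two cases. If $G$ contains a copy of $K_{k,k}$, then that $K_{k,k}$ is itself a $k$-regular (hence $1$-almost-regular) subgraph of average degree $k\asymp \sqrt{r/\log r}\asymp \frac{D^{1/4}}{(\log D)^{1/2}}$. Otherwise $G$ is $K_{k,k}$-free, and Theorem \ref{thm:general main result} applied with this $k$ delivers a $64$-almost-regular subgraph of average degree at least $\frac{r}{160(k+1)\log r}$; the arithmetic $\frac{r}{k\log r}=\frac{1}{c}\sqrt{r/\log r}$ shows that this is of the same order $\frac{D^{1/4}}{(\log D)^{1/2}}$, as required. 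As before, regimes where $r$ or $k$ fails to be "sufficiently large" correspond to $D$ bounded, where the claim is trivial once $\eps$ is taken small enough.

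The core calculation is the balance $k\asymp\sqrt{r/\log r}$, which equalizes the two possible sources of an almost-regular subgraph in the dichotomy. There is no substantive obstacle here: Theorem \ref{thm:general main result} is the engine, and the only new content is the choice of parameters together with the observation that in the unrestricted case we can either exploit a $K_{k,k}$ directly or pass to Theorem \ref{thm:general main result} via $K_{k,k}$-freeness. The routine bookkeeping consists of checking that $\log r\asymp \log D$ up to constants, that $\log\log\Delta\leq\log\log n$ lets the hypothesis on $d$ feed into Theorem \ref{thm:general main result}, and that the small parameter regimes are absorbed into the constant $\eps$.
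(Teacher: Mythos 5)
Your proposal is correct and takes essentially the same route as the paper: choose $r\asymp\sqrt{d/\log\log n}$, then for the unrestricted statement introduce $k\asymp\sqrt{r/\log r}\asymp(d/\log\log n)^{1/4}/(\log(d/\log\log n))^{1/2}$ and split on whether $G$ contains $K_{k,k}$ (use the copy directly as a $k$-regular subgraph) or is $K_{k,k}$-free (apply Theorem~\ref{thm:general main result}), with the small-parameter regimes absorbed into the constant. The paper's proof parametrizes by $k$ first and verifies $\frac{r}{160(k+1)\log r}\geq k$ for small $\eps$, which is the same balance you perform.
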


\begin{proof}
    We only prove the first assertion -- the second one has a very similar proof.
    
    Let $G$ be an $n$-vertex graph with average degree $d$. We may assume that $d/\log \log n$ is sufficiently large, otherwise the statement is trivial. Let $\eps$ be a sufficiently small positive absolute constant and let $k=\lceil \eps \frac{(d/\log \log n)^{1/4}}{(\log (d/\log \log n))^{1/2}}\rceil$. We want to show that $G$ has a $64$-almost-regular subgraph with average degree at least $k$. Let $r=(\frac{d}{80\log \log n})^{1/2}$. If $G$ contains $K_{k,k}$ as a subgraph, then we are done; else, by Theorem \ref{thm:general main result}, $G$ has a $64$-almost-regular subgraph with average degree at least $\frac{r}{160(k+1)\log r}$. But for sufficiently small $\eps$, we have $\frac{r}{160(k+1)\log r}\geq k$, so the proof is complete.
\end{proof}

Since the problem of finding regular subgraphs arises  naturally in many combinatorial settings, it seems very likely that our results will have further applications. 
We conclude the paper by discussing two such applications.

\begin{itemize}
\item
Our results can be used to answer a recent question posed by Alon et al. in \cite{ARS+17}. Motivated by applications to neural networks, in \cite{ARS+17} the authors defined the notion of an \emph{$\alpha$-multitasker} graph and asked (see the discussion after Theorem 1.3 in the full version of their paper \cite{ARSfull}) if there exists an $n$-vertex $\alpha$-multitasker with average degree $\Theta(\log n)$ for some $\alpha>0$, independent of $n$. Our Theorem~\ref{thm:large average degree almost-regular subgraph} (or Theorem~\ref{thm:logloglog error term}) can be used to show that there is no such multitasker with average degree $\omega(\log \log n)$. This is tight as Alon et al. showed that there are $\alpha$-multitaskers with average degree $\Theta(\log \log n)$ and constant $\alpha$.

\item
The second application concerns the notion of spectral degeneracy, introduced by Dvo\v{r}\'ak and Mohar \cite{DM12}. The spectral radius of a graph $G$ is the largest eigenvalue of the adjacency matrix of $G$ and is denoted by $\rho(G)$. We say that $G$ is \emph{spectrally $d$-degenerate} if for every subgraph $H$ of $G$, we have $\rho(H)\leq \sqrt{d\Delta(H)}$ and call the smallest such $d$ the \emph{spectral degeneracy of $G$}. It was observed in \cite{DM12} that every $d$-degenerate graph is spectrally $4d$-degenerate. Moreover, they showed the rough converse that any spectrally $d$-degenerate graph with maximum degree $D\geq 2d$ is $4d\log (D/d)$-degenerate. In addition, they asked whether there exists a function $f$ such that any spectrally $d$-degenerate graph is $f(d)$-degenerate. This was answered negatively by Alon \cite{Alon13}, who constructed, for every $M$, a spectrally $50$-degenerate graph which is not $M$-degenerate. Analyzing his construction more carefully, one can see that in fact there is a positive constant $c$ such that for every $n$ there is a spectrally $50$-degenerate $n$-vertex graph with degeneracy at least $c\log \log n$. Our results can be used to show that this is best possible, i.e. that for every $d$ there is a constant $C=C(d)$ such that any spectrally $d$-degenerate $n$-vertex graph 
has degeneracy at most $C\log \log n$. This follows from our Theorem \ref{thm:logloglog error term} and the simple fact that the spectral degeneracy of a $d$-regular graph is precisely $d$.
\end{itemize}

\bibliographystyle{abbrv}
\bibliography{bibliography}

\end{document}